\documentclass[reqno]{amsart}
\usepackage{amssymb}
\usepackage[mathscr]{eucal}
\usepackage{hyperref}
\usepackage{graphicx}
\usepackage{xcolor}
\usepackage{tikz}
\usetikzlibrary{cd,babel}

\usepackage[font=small,labelformat=empty]{caption}
\usepackage{float}

\theoremstyle{plain}
\newtheorem{prop}{Proposition}[section]
\newtheorem{thm}[prop]{Theorem}

\newtheorem{lem}[prop]{Lemma}

\theoremstyle{definition}
\newtheorem{dfn}[prop]{Definition}
\newtheorem{examples}[prop]{Examples}
\newtheorem{lab}[prop]{}

\newcommand{\isoto}{\overset{\sim}{\to}}

\newcommand{\A}{{\mathbb{A}}}
\newcommand{\N}{{\mathbb{N}}}
\newcommand{\R}{{\mathbb{R}}}

\newcommand{\m}{{\mathfrak{m}}}
\newcommand{\scrO}{{\mathscr{O}}}
\newcommand{\scrV}{{\mathscr{V}}}
\newcommand{\sfS}{\mathsf{S}}

\DeclareMathOperator{\Hom}{Hom}
\DeclareMathOperator{\rk}{rk}
\DeclareMathOperator{\sosx}{sosx}
\DeclareMathOperator{\sxdeg}{sxdeg}
\DeclareMathOperator{\Spec}{Spec}

\newcommand{\id}{\mathrm{id}}
\newcommand{\comp}{\mathbin{\scriptstyle\circ}}
\newcommand{\du}{{\scriptscriptstyle\vee}}
\newcommand{\ol}{\overline}
\newcommand{\plus}{{\scriptscriptstyle+}}

\newcommand{\ex}{\exists\,}

\renewcommand{\subset}{\subseteq}
\renewcommand{\supset}{\supseteq}
\newcommand{\idl}[1]{\langle #1\rangle}

\newcommand{\lpow}{[\![}
\newcommand{\rpow}{]\!]}

\newcommand{\bil}[2]{\langle{#1},{#2}\rangle}

\newcommand{\sa}{semialgebraic}
\newcommand{\Label}[1]{\label{#1}}


\begin{document}

\title
[Smooth hyperbolicity cones are second-order cone representable]
{Smooth hyperbolicity cones\\are second-order cone representable}

\author{Claus Scheiderer}
\address
  {Fachbereich Mathematik und Statistik \\
  Universit\"at Konstanz \\
  78457 Konstanz \\
  Germany}
\email
  {claus.scheiderer@uni-konstanz.de}

\begin{abstract}
Netzer and Sanyal proved that every smooth hyperbolicity cone is a
spectrahedral shadow. We generalize and sharpen this result at the
same time, by showing that every Nash-smooth hyperbolicity cone is
even second-order cone representable (socr). The result is proved as
a consequence of our second theorem, according to which every compact
convex \sa\ set with Nash-smooth boundary of strict positive
curvature is socr. The proof uses the technique of tensor evaluation.
\end{abstract}

\maketitle

\section{Introduction}

A homogeneous real polynomial $f\in\R[x]=\R[x_1,\dots,x_n]$ is said
to be \emph{hyperbolic} with respect to a point $e\in\R^n$ if
$f(e)\ne0$ and if, for every $u\in\R^n$, all complex zeros of the
univariate polynomial $f(te-u)$ are real. Geometrically this says
that every real line through the point $[e]$ in projective
$(n-1)$-space intersects the hypersurface $f=0$ in real points only.
The notion of hyperbolic forms originates in the study of partial
differential operators and goes back to Petrovskii and G\aa rding. In
particular, G\aa rding \cite{ga} proved that the set
$C_e(f)=\{u\in\R^n\colon f(te-u)\ne0$ for $t<0\}$ is a closed convex
cone, called the \emph{hyperbolicity cone} of $f$ (with respect
to~$e$), and that this cone agrees with the closure of the connected
component of $\{u\colon f(u)\ne0\}$ that contains~$e$. Since the
early 2000s, hyperbolic forms have moved into the focus of real
algebraic geometry, algebraic combinatorics and optimization.
According to G\"uler \cite{gu}, the function $-\log|f(x)|$ is a
self-concordant barrier function for the cone $C_e(f)$, a fact that
makes hyperbolicity cones available for convex programming.

If $M$ is a real symmetric matrix, the notation $M\succ0$ means
that $M$ is positive definite; likewise, $M\succeq0$ means that
$M$ is positive semidefinite. Let $A_1,\dots,A_n$ be real symmetric
$d\times d$-matrices and write $A(x)=\sum_{i=1}^nx_iA_i$ for
$x\in\R^n$. Convex cones of the form
$\{x\in\R^n\colon A(x)\succeq0\}$ are called \emph{spectrahedral}. If
there is $e\in\R^n$ with $A(e)\succ0$, the form $f(x)=\det A(x)$ will
be hyperbolic with respect to $e$, with associated hyperbolicity cone
$C_e(f)=\{x\in\R^n\colon A(x)\succeq0\}$. For $n=3$, every hyperbolic
form arises in this way (up to sign), according to the celebrated
\emph{Lax conjecture} that was proved by Helton and Vinnikov (cf.\
\cite{hv,lpr}). For $n\ge4$ variables it is easy to see that there
exist much more hyperbolic forms than just the forms $\det A(x)$ with
$A(e)\succ0$. But it is a major open question whether these give rise
to any new hyperbolicity cones. Rather, according to the so-called
\emph{generalized Lax conjecture (GLC)}, all hyperbolicity cones
should be spectrahedral. In many special cases this has been shown to
be true, but in general the conjecture is wide open, see for instance
\cite{v,NP}.

A weakening of GLC asks whether every hyperbolicity cone is at least
a spectrahedral shadow, i.e., a linear image of a spectrahedral cone.
Even this weaker question is completely open in general. However,
Netzer and Sanyal \cite{ns} proved that the answer is positive when
the hyperbolicity cone $C_e(f)$ in question has smooth boundary. In
other words, they showed that, under the smoothness assumption,
$C_e(f)$ is the solution set of a \emph{lifted linear matrix
inequality} (\emph{lifted LMI}), which means that there exist
symmetric matrices $A_i$ ($i=1,\dots,n$) and $B_j$ ($j=1,\dots,m$,
for some $m$) with
\begin{equation}\Label{cefshadow}%
C_e(f)\>=\>\Bigl\{x\in\R^n\colon\ex y\in\R^m\ \sum_{i=1}^nx_iA_i
+\sum_{j=1}^my_jB_j\succeq0\Bigr\}.
\end{equation}
This result implies that convex optimization over smooth
hyperbolicity cones can in fact be performed as semidefinite programs
(SDP).

Our first main result is:

\begin{thm}\Label{main2}%
Let the form $f\in\R[x]$ be hyperbolic with respect to $e\in\R^n$. If
the hyperbolicity cone $C_e(f)$ has Nash-smooth boundary, there
exists a lifted LMI representation \eqref{cefshadow} of $C_e(f)$ in
which the matrices $A_i$ and $B_j$ are of common block-diagonal
shape, with each diagonal block of size at most $2\times2$.
\end{thm}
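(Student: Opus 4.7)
The plan is to derive Theorem~\ref{main2a} from the companion theorem announced in the abstract: every compact convex \sa\ set with Nash-smooth boundary of strict positive curvature is second-order cone representable. Granting this, the task splits into (i) passing from the cone $C_e(f)$ to a compact affine slice, (ii) checking that the slice satisfies the curvature hypothesis, and (iii) lifting the resulting representation back to the cone.

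For (i), one may assume (after quotienting by its lineality space) that $C_e(f)$ is pointed, pick a linear form $\ell$ in the interior of the dual cone with $\ell(e)=1$, and set $K=C_e(f)\cap\{\ell=1\}$. Pointedness together with interiority of $\ell$ makes $K$ a compact convex \sa\ body whose relative boundary is $\{f=0\}\cap\{\ell=1\}\cap C_e(f)$; Nash-smoothness of $\partial C_e(f)$ therefore transfers directly to Nash-smoothness of $\partial K$.

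Step~(ii) is the only place where hyperbolicity enters essentially. At every smooth boundary point~$p$ of $C_e(f)$, the real-rootedness of the polynomials $t\mapsto f(p+sv+te)$ in~$t$, for tangent directions $v$ and small $s$, forces the implicit smooth branch $t(s)$ with $t(0)=0$ to be strictly convex in~$s$; equivalently, the second fundamental form of $\partial C_e(f)$ at~$p$ is positive definite in all directions transverse to $\R e$. Since the hyperplane $\{\ell=1\}$ is transverse to $\R e$, this yields strict positive curvature of $\partial K$. Step~(iii) is then immediate: the auxiliary theorem delivers a lifted LMI representation of $K$ with blocks of size at most $2\times 2$, and homogenizing—replacing each constant term $a$ in the matrix entries by $a\cdot\ell(x)$—gives an LMI of the same form for $C_e(f)$.

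The main technical work in the paper thus lies in the auxiliary theorem on compact convex sets, which the abstract announces is proved via tensor evaluation. The cone-to-slice reduction is standard and the Gårding-type curvature lemma is a short computation; I expect the real obstacle, both conceptually and in the details of the present proof, to be the construction of $2\times2$-block certificates from Nash-smoothness and positive curvature data, which is exactly the content of the companion compact-case theorem.
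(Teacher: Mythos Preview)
Your overall architecture---slice the cone by a compact affine hyperplane section, apply the compact-case theorem (Theorem~\ref{main1a}), then pass back to the conical hull---is exactly the paper's route (see \ref{lastep}). The Nash-smoothness transfer to the slice is Lemma~\ref{isi}, and the homogenization step is handled by citing \cite[Prop.~1.6]{sch:sxdeg}; both match your steps~(i) and~(iii).

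The gap is in step~(ii), which you call ``a short computation.'' Your sketch (``real-rootedness of $t\mapsto f(p+sv+te)$ forces the implicit smooth branch $t(s)$ to be strictly convex in~$s$'') is essentially the Netzer--Sanyal argument for the \emph{smooth} case. In the Nash-smooth setting the boundary point $u$ may be a \emph{singular} point of the hypersurface $f=0$, of multiplicity $m\ge2$ (see Examples~\ref{nashnotsmo}). Then there is no single implicit branch: locally $f$ factors into $m$ Nash branches through~$u$, only one of which bounds the cone, and $f(u+sv+te)$ has an $m$-fold root at $(s,t)=(0,0)$. Hyperbolicity constrains the product of the branches, not any individual Nash factor, so your argument does not explain why the boundary branch has \emph{strictly} (rather than merely nonnegatively) positive second derivative.

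The paper flags this as the main technical step (Proposition~\ref{keystep}, and the remark after it contrasting with \cite[Lemma~2.4(ii)]{ns}) and resolves it by invoking the Helton--Vinnikov theorem (Theorem~\ref{lax}). Restricting to the plane through $e,u,v$ one writes the restricted polynomial as $\det(I+sU+tV)$; analyzing $\ker(I+U)$ lets one compute the tangent cone at~$u$ and the leading term of $g(x,0)$ explicitly (equation~\eqref{knakpkt}). The hypothesis that $f$ vanishes on no line through~$u$ then forces every branch to have order exactly~$2$, yielding strict curvature for the boundary branch. So the curvature lemma here is not a routine G\aa rding-type estimate; it rests on the two-variable Lax conjecture, which your proposal omits.
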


The notion of Nash-smooth boundary used here is more general than the
concept of smooth boundary, as used in \cite{ns,hn09,hn10} for
example. Roughly, it only requires the cone $C_e(f)$ to be locally
defined by a smooth Nash inequality, instead of a smooth polynomial
inequality. See Section~\ref{smobopo} for precise definitions. Apart
from this, the main difference to the Netzer-Sanyal theorem \cite{ns}
is that a lifted LMI representation \eqref{cefshadow} can be found
that consists of a collection of $2\times2$ matrices. The cone
$C_e(f)$ is therefore even second-order cone (soc) representable,
meaning that convex optimization over such cones can be performed as
second-order cone programs (SOCP) \cite{BTN}.
The latter are known to perform much faster and much more efficient
than general semidefinite programs.

Our second result sharpens and generalizes a theorem of Helton and
Nie:

\begin{thm}\Label{main1}%
Let $K$ be a compact convex \sa\ set. Assume that the boundary of $K$
is Nash-smooth and has strict positive curvature everywhere. Then $K$
has a lifted LMI representation
\begin{equation}\Label{kshadow}%
K\>=\>\Bigl\{x\in\R^n\colon\ex y\in\R^m\ A_0+\sum_{i=1}^nx_iA_i
+\sum_{j=1}^my_jB_j\succeq0\Bigr\}
\end{equation}
in which the matrices $A_i$ and $B_j$ have a common block-diagonal
structure with all diagonal blocks of size at most $2\times2$.
\end{thm}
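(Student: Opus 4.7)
The plan is to express $K$ as an intersection of a semialgebraic (Nash) family of Euclidean balls, and then convert this infinite intersection into a finite lifted LMI via the tensor-evaluation technique announced in the abstract.

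\emph{Step 1: $K$ as an intersection of containing balls.} Fix a boundary point $p\in\partial K$, let $\nu(p)$ denote the outward unit normal, and consider, for $R>0$, the closed ball $B_p:=B\bigl(p-R\nu(p),R\bigr)$, tangent to $\partial K$ at $p$ from inside. It is contained in the supporting half-space $\{x:(x-p)\cdot\nu(p)\le 0\}$, and the containment $K\subset B_p$ is equivalent to $(q-p)\cdot\nu(p)\le-\|q-p\|^2/(2R)$ for all $q\in K$. For $q$ close to $p$, the hypothesis of strict positive curvature, combined with a Taylor expansion of the local Nash defining inequality of $\partial K$, provides a constant $c_0>0$ with $(q-p)\cdot\nu(p)\le-c_0\|q-p\|^2$, which implies the required estimate once $R\ge 1/(2c_0)$. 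For $q$ away from $p$, strict convexity of $K$ keeps $(q-p)\cdot\nu(p)$ uniformly negative, and the estimate again holds for sufficiently large $R$. Compactness of $\partial K$ makes $R$ uniform in $p$, so that $K=\bigcap_{p\in\partial K}B_p$. Because $\partial K$ is Nash-smooth, the Gauss map is Nash, hence the centre function $c(p)=p-R\nu(p)$ is a Nash map from the compact semialgebraic set $T:=\partial K$ to $\R^n$.

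\emph{Step 2: tensor evaluation.} Each individual constraint $\|x-c(t)\|^2\le R^2$, $t\in T$, is a Lorentz (second-order) cone inequality, representable by a single $2{\times}2$ PSD block. The family as a whole is therefore a \emph{robust} Lorentz-cone constraint indexed by the compact Nash-smooth parameter set $T$, i.e.\ a universally quantified polynomial inequality in $x$ whose coefficients are Nash (hence semialgebraic) functions of~$t$. This is exactly the type of datum that the tensor-evaluation machinery developed earlier in the paper is designed to process: it converts such a robust constraint into a finite lifted LMI. Since every cone in the quantified family is already second-order conic, one expects---and the machinery should deliver---an output consisting only of $2{\times}2$ PSD blocks, glued together by auxiliary variables $y\in\R^m$. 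This yields the desired representation \eqref{kshadow} with diagonal blocks of size at most $2{\times}2$.

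The main obstacle, I expect, is not Step~1, which is essentially classical differential-geometric bookkeeping refined to the Nash category, but rather the verification in Step~2 that tensor evaluation indeed applies to robust constraints over a compact Nash-smooth parameter space and preserves the Lorentz block structure. Strict positive curvature enters Step~1 twice: through the Taylor estimate to produce the uniform radius $R$, and through the inverse function theorem to make the Gauss map a Nash diffeomorphism, so that the centre map $c$ is Nash. Nash-smoothness of $\partial K$, rather than smoothness of a polynomial defining equation, is the precise regularity needed to carry out this parametrisation in the semialgebraic category; once the family $(c,R)$ is in place, tensor evaluation operates as the designated black box and the theorem follows.
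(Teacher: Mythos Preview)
Your Step~1 is sound: under strict positive curvature and compactness, $K$ is indeed an intersection of balls of uniform radius, and the centre map can be made Nash. But Step~2 rests on a misconception about what tensor evaluation is, and this is a genuine gap.

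Tensor evaluation is not a procedure that ingests a robust (universally quantified) family of conic constraints and outputs a finite lifted LMI. It is a \emph{criterion}: $\sxdeg(K)\le d$ holds if and only if, for every real closed field $R\supset\R$, every extreme-ray generator $f$ of the cone $P_R$ of nonnegative affine-linear functionals on $K_R$, and every $\xi\in\partial K_R$, the element $f^\otimes(\xi)\in R\otimes_\R R$ satisfies $\sosx f^\otimes(\xi)\le d$. The substance of the proof lies entirely in \emph{verifying} this condition, and your proposal does not attempt that. Note also that the ball description does nothing to the cone $P$ or its extreme rays: those are still the positive tangents $T_v$ for $v\in\partial K$, and the question is still whether $\sosx T_\eta^\otimes(\xi)\le 2$ for all $\xi,\eta\in\partial K_R$. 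So even granting Step~1, you have not made progress on what actually needs to be checked.

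The paper's argument proceeds quite differently. After identifying the extreme rays of $P$ with the tangents $T_v$ and reducing (by a specialization argument) to the case where $\xi$ and $\eta$ specialize to the same boundary point $u$, it lifts the local Nash inequality to a polynomial $g$ on an \'etale cover $X\to\A^n$ and passes to $Y\times Y$ with $Y=\{g=0\}$. There the tangents are encoded by a single universal tensor $\tau$ lying in $J^2$, where $J\subset B\otimes B$ is the diagonal ideal. The technical heart is an explicit identity $\tau=\sum_\nu\gamma_\nu\,\delta(z_\nu)^2$ with each $\gamma_\nu$ strictly positive at $(u,u)$; this is obtained by diagonalizing the leading (quadratic) part of $\tau$ modulo $J^3$, which is possible precisely because the Hessian of $\tau_u$ is positive definite. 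Evaluating at $(\xi,\eta)$ then gives $\sosx\le 2$ immediately. Strict positive curvature thus enters not to manufacture a ball representation, but to make that Hessian positive definite --- that is the mechanism you are missing.
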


See \ref{strquasiconc} and \ref{dfnashsmoth} for the notion of strict
positive curvature. Helton and Nie \cite{hn09,hn10} had proved that
compact convex \sa\ sets are spectrahedral shadows, provided their
boundary is smooth of strict positive curvature (see also
\cite[Thm.~8.6.29]{Sch}). Theorem \ref{main1} covers the larger class
of sets where the boundary is just Nash-smooth. More importantly, it
shows that such sets are even soc-representable.

Fawzi \cite{fa} was the first to observe that the family of
soc-representable sets is much smaller than the family of all
spectrahedral shadows. The gap between the two was studied in detail
by Averkov \cite{av}.
His invariant $\sxdeg(K)$ defines an infinite hierarchy of geometric
complexity for convex sets $K$,
which roughly corresponds to the maximal block size in a
block-diagonal lifted LMI representation \eqref{kshadow} of~$K$.
That $K$ is a spectrahedral shadow means $\sxdeg(K)<\infty$, while
being soc-representable means $\sxdeg(K)\le2$ (see
\ref{spectrahsxdegetc} below for precise details). Our results are
therefore saying that the cones $C_e(f)$ and $K$ in Theorems
\ref{main2} and \ref{main1} have the lowest level of geometric
complexity in this hierarchy, apart from polyhedra.

The paper is structured as follows.
After explaining the necessary technical background, we'll start by
proving Theorem \ref{main1} in Section~\ref{proof1}. The method of
proof is independent of the Helton-Nie approach. Rather it relies on
the technique of tensor evaluation, as introduced in
\cite{sch:sxdeg}. Much like Netzer-Sanyal used the results of Helton
and Nie to prove their main theorem \cite{ns}, we will then use
Theorem \ref{main1} to prove Theorem \ref{main2}
(Section~\ref{proof2}). The proof gets more complicated, though, than
in \cite{ns}, due to the weaker assumptions.

\emph{Some general conventions}:
By an affine algebraic variety over the field $k$ we mean an affine
$k$-scheme $X$ of finite type. The coordinate ring of $X$ is denoted
$k[X]$. Affine $n$-space over $k$ is denoted
$\A^n=\Spec k[x_1,\dots,x_n]$. If $E/k$ is a field extension, the set
of $E$-rational points of $X$ is $X(E)=\Hom_k(k[X],E)$. The ideal of
a ring $A$ generated by $a_1,\dots,a_r\in A$ is written
$\idl{a_1,\dots,a_r}$. We write $\R_\plus:=\{a\in\R\colon a\ge0\}$.


\section{Smooth boundary points of convex sets}\Label{smobopo}%

\begin{lab}\Label{hessian}%
Let $f\colon U\to\R$ be a smooth function defined on an open set $U$
in $\R^n$. Gradient and Hessian of $f$ at $u\in U$ are denoted
$\nabla_f(u)=\bigl(\partial_1f(u),\dots,\partial_nf(u)\bigr)^t$
and $\nabla^2_f(u)=\bigl(\partial_i\partial_jf(u)\bigr)
_{1\le i,j\le n}$, respectively, where
$\partial_i=\frac\partial{\partial x_i}$.

Let $V$ be an affine $\R$-variety, let $u\in V(\R)$ be a
nonsingular point of $V$ and let $f\in\R[V]$ have vanishing order
$\ge2$ at~$u$. The Hessian
of $f$ at $u$ is a well-defined symmetric bilinear form on the
tangent space of $V$ at $u$. If $x_1,\dots,x_n\in\R[V]$ is a regular
system of parameters at $u$, it
is represented by the symmetric matrix
$\bigl(\frac{\partial^2f(u)}{\partial x_i\partial x_j}\bigr)_{i,j}$
of second partial derivatives.
\end{lab}

\begin{lab}\Label{strquasiconc}%
Following \cite{hn10}
we say that a $C^2$-function $f\colon U\to\R$, defined on an open
set $U\subset\R^n$, is \emph{strictly quasi-concave} at $v\in U$ if
$w^t\cdot\nabla^2_f(v)\cdot w<0$ for every (column) vector $w\ne0$ in
$\R^n$ with $\bil w{\nabla_f(v)}=0$.

Let $K$ be a closed convex \sa\ set in $\R^n$ with non-empty
interior, and write $\R[x]=\R[x_1,\dots,x_n]$. Recall
\cite{hn09,hn10,ns}
that $v\in\partial K$ is a \emph{smooth boundary point} of $K$ if
there is a polynomial $h\in\R[x]$ with $\nabla_h(v)\ne0$ such that
$K\cap U=\{\xi\in U\colon h(\xi)\ge0\}$ for some neighborhood $U$ of
$v$. We'll refer to such $h$ as a \emph{positive inequality} for $K$
at~$v$. Given such $v$ and $h$, the boundary of $K$ is said to have
\emph{strict
positive curvature} at $v$ if $h$ is strictly quasi-concave at~$v$,
cf.\ \cite[3.1]{hn09}.

In other words, $\partial K$ has strict positive curvature at $v$ if
and only if the second fundamental form of $\partial K$ at $v$ is
negative definite. Note that the set of smooth boundary points of $K$
is relatively open in $\partial K$. The same is true for the set of
(smooth) boundary points at which $\partial K$ has strict positive
curvature.
\end{lab}

\begin{lab}\Label{poscurvequiv}%
For the proofs of our main results, the following equivalent
characterization will be useful. Let $h\in\R[x]$ be a positive
inequality for $K$ at the smooth boundary point $v$. The linear
polynomial
$$T_v\>=\>\sum_{i=1}^n\frac{\partial h}{\partial x_i}(v)\cdot
(x_i-v_i)$$
will be called the \emph{positive tangent} to $K$ at~$v$. Up to
scaling by a positive real number, $T_v$ depends only on $K$ and~$v$.
By definition, the hyperplane in $\R^n$ defined by $T_v$ is
the unique supporting hyperplane that touches $K$ at~$v$. One checks
easily that $T_v$ is indeed non-negative on $K$.

Let $Y=\scrV(h)$, the hypersurface in $\A^n$ defined by the equation
$h=0$. Then $v$ is a nonsingular point of $Y$. Let $\m_v$ be the
maximal ideal of the local ring $\scrO_{Y,v}$, and let
$\Theta_v\in\R[Y]$ be the restriction of $T_v$ to $Y$. So $\Theta_v$
is the coset of $T_v$ in $\R[x]/\idl h$. Then $\Theta_v$ vanishes of
order $\ge2$ at~$v$.
The class of $\Theta_v$ in $\m_v^2/\m_v^3$ is naturally identified
with the Hessian of $\Theta_v$ at~$u$, see \ref{hessian}. It is
positive definite if and only if $h$ is strictly quasi-convex at $v$
(if and only if $K$ has strict positive curvature at~$v$), see
\ref{strquasiconc}.
\end{lab}

To state our main results in greater generality, we introduce the
following notion that generalizes the concept of smooth boundary
points. Recall that a function $h\colon U\to\R$, defined on an open
set $U\subset\R^n$, is said to be Nash if its graph is semialgebraic
and $f$ is $C^\infty$ (equivalently, analytic). For general
background on Nash functions we refer to \cite[ch.~8]{BCR}.

\begin{dfn}\Label{dfnashsmoth}%
Let $K\subset\R^n$ be a closed convex \sa\ set with non-empty
interior. We say that $v\in\partial K$ is a \emph{Nash-smooth
boundary point} of $K$ if there exists a Nash function $h$, defined
on an open neighborhood $U$ of $v$ in $\R^n$, such that
$\nabla_h(v)\ne0$ and $K\cap U=\{\xi\in U\colon h(\xi)\ge0\}$. Again,
such $h$ will be called a positive (Nash) inequality for $K$ at~$v$.
The boundary $\partial K$ has \emph{strict positive curvature} at $v$
if $h$ is strictly quasi-concave at~$v$. And as before, we define the
positive tangent to $K$ at $v$ by
$$T_v\>=\>\sum_{i=1}^n\frac{\partial h}{\partial x_i}(v)\cdot
(x_i-v_i)$$
(well-defined up to a positive scalar factor).
\end{dfn}

\begin{examples}\Label{nashnotsmo}%
The difference between Definition \ref{dfnashsmoth} and the usual
definition \ref{strquasiconc} is, of course, that local inequalities
for $K$ can be given by local Nash functions in \ref{dfnashsmoth},
and need not be polynomials. Here are a few examples illustrating
this point:
\smallskip

1.\enspace
Consider the polynomial $p=x(x^2+y^2)-(x^4+x^2y^2+y^4)$ and the
convex set $K=\{(x,y)\in\R^2\colon p(x,y)\ge0\}$ in the plane.
The origin $O=(0,0)$ is a boundary point of $K$ and is a singular
point of the irreducible curve $p=0$. Locally at $O$, therefore,
$K$ cannot be defined by a polynomial inequality $h\ge0$ with
$\nabla_h(O)\ne0$. But $p$ is a product
$p=h_1h_2=(x-y^2-y^6\cdots)(x^2+y^2-x^3-x^2y^2-xy^4-y^6\cdots)$ of
two local Nash functions at $O$, the first of which is a local
Nash inequality for $K$.
Note that $\partial K$ has strict positive curvature at~$O$, but not
at the (smooth) boundary point $(0,1)$ (see Fig.~1).
\smallskip

2.\enspace
Another type of example is given by the irreducible polynomial
$p=x^5-x^3y-x^2y-y^3+y^2$ and the convex set
$K=\{(x,y)\colon p(x,y)\ge0$, $y\ge x^3\}$. Other than in the
previous example, the curve $p=0$ has two real branches at
$O=(0,0)$, corresponding to the product decomposition
$p=(y-x^2-x^4-x^5\cdots)(y-y^2-x^3-x^2y+x^5-x^4y\cdots)$. The first
factor is a local Nash inequality for $K$, showing that $\partial K$
has strict positive curvature at~$O$ (Fig.~2).
\smallskip

3.\enspace
The linear symmetric matrix polynomial
$$A(x,y)\>=\>\begin{pmatrix}3+x-2y&-x&x&0\\-x&2-y&0&x\\x&0&y&0\\
0&x&0&y\end{pmatrix}$$
has the determinant $p=x^4-x^3y+2x^2y^2-xy^3+2y^4-5x^2y+2xy^2-7y^3
+6y^2$. The set $S=\{(x,y)\in\R^2\colon A(x,y)\succeq0\}$ is a
compact spectrahedron (see \ref{spectrahsxdegetc}) with Nash-smooth
boundary. At the origin $O=(0,0)$, the boundary $\partial S$ is not
smooth. Rather $p$ factorizes as a product
$p=(3y-x^2+xy+y^2\cdots)(2y-x^2-3y^2\cdots)$ of two Nash functions
that are defined in a neighborhood of~$O$ (Fig.~3):

\begin{figure}[H]
\centering
\captionbox{Fig.~1}[.3\linewidth]{\includegraphics[scale=0.2]{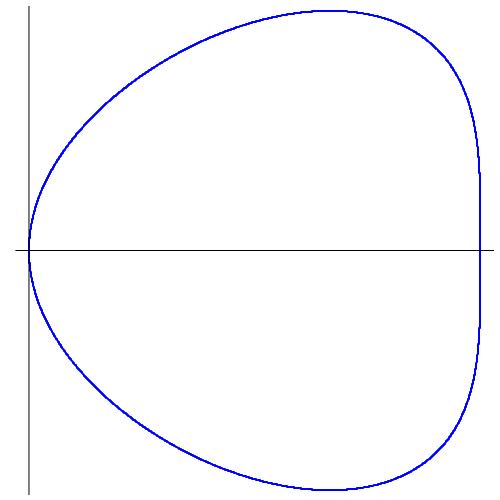}}%
\hfill
\captionbox{Fig.~2}[.3\linewidth]{\includegraphics[scale=0.2]{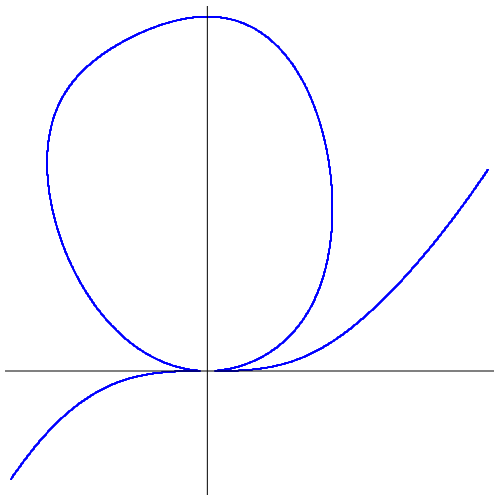}}%
\hfill
\captionbox{Fig.~3}[.3\linewidth]{\includegraphics[scale=0.2]{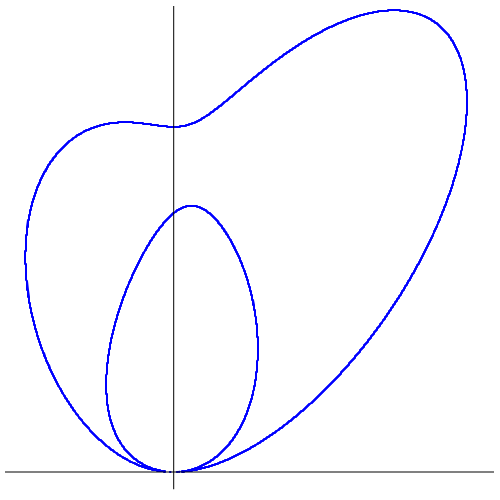}}%
\end{figure}
\end{examples}

\begin{lab}\Label{nashetalpoly}%
Let $v$ be a Nash-smooth boundary point of the closed convex set
$K\subset\R^n$, and let the Nash function $h$, defined on some
neighborhood of $v$, be a positive inequality for $K$ at $v$
(Definition \ref{dfnashsmoth}). We generalize characterization
\ref{poscurvequiv} to the Nash-smooth case, as follows. Using the
Artin-Mazur description of Nash functions (\cite{am}, see also
\cite[Thm.~8.1.4]{BCR}) we may lift the local Nash description
$h\ge0$ of $K$ to a polynomial description on some \'etale cover.
More specifically, there exists an \'etale morphism
$\pi\colon X\to\A^n$ (with $X$ a smooth affine $\R$-variety) and a
point $u\in X(\R)$ with $\pi(u)=v$, plus a polynomial $g\in\R[X]$,
such that $g(\xi)=h(\pi(\xi))$ for every $\xi$ in some neighborhood
of $u$ in $X(\R)$. In other words, $h=g\comp s$ locally around $v$,
where $s$ is the unique local $C^\infty$-section of
$\pi\colon X(\R)\to\R^n$ with $s(v)=u$.
Note that $s$ is a local diffeomorphism between a neighborhood of $v$
in $\R^n$ and a neighborhood of $u$ in $X(\R)$.

Pulling back the sequence $x_1-v_1,\dots,x_n-v_n$ from $\A^n$ to $X$
via $\pi$, we get a regular sequence of parameters $y_1,\dots,y_n$ on
$X$ at $u$. The positive tangent $T_v\in\R[x]$ (to $K$ at $v$, see
\ref{dfnashsmoth}) gets pulled back to the element
\begin{equation}\Label{pullbackT2t}%
t_u\>:=\>\sum_{i=1}^n\frac{\partial g}{\partial y_i}(u)\cdot y_i
\end{equation}
in $\R[X]$.
Let $Y=\scrV_X(g)$ be the subvariety $g=0$ of $X$, and let $\tau_u$
be the image of $t_u$ in $\R[Y]=\R[X]/\idl g$. Since $\pi$ gives a
local diffeomorphism (from a neighborhood of $u$ in $X(\R)$ to a
neighborhood of $v$ in $\R^n$), $\tau_u$ vanishes on $Y$ of order
$\ge2$ at $u$. Moreover, $\partial K$ has strict positive curvature
at $v$ if and only if the Hessian of $\tau_u$ at $u$ is positive
definite (see \ref{hessian}).
\end{lab}


\section{Proof of Theorem \ref{main1}}\Label{proof1}%

\begin{lab}\Label{spectrahsxdegetc}%
A set $K\subset\R^n$ is a \emph{spectrahedron} if $K$ is the solution
set of a \emph{linear matrix inequality (LMI)}
$A_0+\sum_{i=1}^nx_iA_i\succeq0$, where the $A_i$ are real symmetric
matrices of some size.
Linear images of spectrahedra are called \emph{spectrahedral
shadows}.

Let $K$ be a convex \sa\ set in $\R^n$. The \emph{semidefinite
extension degree} of $K$, denoted $\sxdeg(K)$, was defined by Averkov
\cite{av} as the smallest integer $d$ such that $K$ is a linear image
of a finite intersection of spectrahedra, each of which is described
by an LMI of size at most $d\times d$. So $\sxdeg(K)\le d$ means that
$K$ can be represented in the form
$$K\>=\>\Bigl\{x\in\R^n\colon\ex y\in\R^m\ M_1(x,y)\succeq0,\dots,
M_r(x,y)\succeq0\Bigr\}$$
for some $m,\,r\ge0$ where, for each $\nu=1,\dots,r$,
$M_\nu(x,y)=A_{\nu0}+\sum_{i=1}^nx_iA_{\nu i}+\sum_{j=1}^my_j
B_{\nu j}$ is a linear reak symmetric matrix polynomial of size at
most $d\times d$. The hierarchy defined by this invariant is strict,
for example since the cone $\sfS^d_\plus$ of psd real symmetric
$d\times d$-matrices has $\sxdeg(\sfS^d_\plus)=d$ for every $d\ge1$
\cite{av}. By definition, $\sxdeg(K)\le1$ holds if and only if $K$ is
a polyhedron, while $\sxdeg(K)\le2$ if and only if $K$ is
second-order cone representable \cite[Example 1.5]{sch:sxdeg}. Note
that $\sxdeg(K)<\infty$ if and only if $K$ is a spectrahedral shadow.
\end{lab}

\begin{lab}\Label{tenseval}%
In this terminology, Theorem \ref{main1} asserts that
$\sxdeg(K)\le2$ holds for $K$ as in the theorem. A useful technique
for working with the invariant $\sxdeg(K)$ is the concept of tensor
evaluation, introduced in \cite{sch:sxdeg}. Let $X$ be an affine
$\R$-variety and let
$R\supset\R$ be a real closed field. Write $R[X]=\R[X]\otimes R$
(tensor product over~$\R$) and let $f\in R[X]$. If $\xi\in X(R)$ is
an $R$-rational point of $X$ (i.e., a homomorphism
$\xi\colon\R[X]\to R$), the \emph{tensor evaluation}
$f^\otimes(\xi)$ (of $f$ at~$\xi$) is the image of $f$ under the
ring homomorphism
$$R[X]\>=\>\R[X]\otimes R\xrightarrow{\xi\otimes\id}R\otimes R\>=\>
R\otimes_\R R.$$
Note that the usual evaluation $f(\xi)$ (the image of $f$ under the
homomorphism $\xi$) is the result of applying the product map
$R\otimes R\to R$ to $f^\otimes(\xi)$.

Given a \sa\ subset $S$ of $X(\R)$, let $S_R$ denote the base field
extension of $S$ to $R$ \cite[Sect.~4.1]{Sch}. This is the subset of
$X(R)$ that is described by the same finite system of polynomial
inequalities as~$S$. Given $\theta\in R\otimes R$, the tensor rank
$\rk(\theta)$ of $\theta$ is the minimal number $r\ge0$ such that
$\theta=\sum_{i=1}^ra_i\otimes b_i$ for suitable $a_i,\,b_i\in R$. If
$\theta$ is a sum of squares in $R\otimes R$, let $\sosx(\theta)$ be
the smallest integer $d\ge0$ such that there is an identity
$\theta=\sum_i\theta_i^2$ in $R\otimes R$ with $\rk(\theta_i)\le d$
for all~$i$.
Put $\sosx(\theta)=\infty$ if $\theta$ is not a sum of squares. To
prove Theorem \ref{main1} we are going to apply the following
criterion. It is a particular case of \cite[Thm.~3.10]{sch:sxdeg}:
\end{lab}

\begin{thm}\Label{critenseval}%
Let $K\subset\R^n$ be a compact and convex \sa\ set, let
$P=\{f\in\R[x_1,\dots,x_n]\colon\deg(f)\le1$, $f|_K\ge0\}$, and let
$E$ denote the set of elements in $P$ that span an extreme ray of the
cone~$P$. Let $d\ge1$ be an integer. Then $\sxdeg(K)\le d$ if and
only if $\sosx f^\otimes(\xi)\le d$ holds for every real closed field
$R\supset\R$, every $f\in E_R$ and every $\xi\in\partial K_R$.
\end{thm}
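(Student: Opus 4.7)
Since Theorem~\ref{critenseval} is stated as a particular case of \cite[Thm.~3.10]{sch:sxdeg}, my plan is to recover here the portion of the general argument that is needed, and the dictionary to establish is the following: a block-diagonal lifted LMI with blocks of size at most~$d$ on the primal side corresponds, on the dual non-negativity side, to sum-of-squares decompositions in $R\otimes R$ whose summands have tensor rank at most~$d$.

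For the ``only if'' direction I would begin with a lifted LMI $K=\{x\colon\ex y,\;M(x,y)\succeq0\}$, where $M(x,y)=A_0+\sum x_iA_i+\sum y_jB_j$ is block-diagonal with blocks of size $\le d$. Given $f\in E_R$, SDP duality over the real closed field~$R$ produces a positive semidefinite block-diagonal matrix $Z$ with $\mathrm{tr}(ZB_j)=0$ that certifies $f\ge0$ on $K_R$. Decomposing each block of~$Z$ as a sum of rank-one tensors $z_kz_k^t$, and pairing with $M(\xi,y^*)$ at a witness $y^*$ for $\xi\in K_R$, one reads off an sos identity for $f^\otimes(\xi)$ in $R\otimes R$ in which every summand has tensor rank bounded by the block size, hence $\le d$.

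For the ``if'' direction I would reconstruct the LMI from the tensor data: for each extreme ray $f\in E$ and each relevant boundary witness, the hypothesized rank-$\le d$ sos decomposition of $f^\otimes(\xi)$ yields a $d\times d$ symmetric block contributing a spectrahedral shadow that contains~$K$, and one argues that the intersection of all such shadows equals~$K$. The universal quantification over real closed $R\supset\R$ is what makes this go through: by Tarski transfer, no extreme ray arising in a non-standard extension of~$\R$ can obstruct the assembly, and the uniformity of the rank bound is what allows an a priori infinite family of pointwise certificates to be bundled into one finite lifted LMI.

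The main obstacle is the sufficiency direction: pointwise tensor certificates must be glued into a single finite LMI even though the extreme rays of~$P$ may form an infinite family. Resolving this requires the model-theoretic input of Tarski transfer together with the compactness of~$K$, and this is precisely the technical content of \cite[Thm.~3.10]{sch:sxdeg}; once that general criterion is in place, specialization to the present statement is essentially formal.
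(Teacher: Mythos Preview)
The paper does not prove this theorem: it is simply quoted as a particular case of \cite[Thm.~3.10]{sch:sxdeg}, with no argument given. Your proposal correctly identifies this and goes further than the paper by sketching the content of the cited result.

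Your outline of the necessity direction captures the right mechanism, though the tensor structure should be made explicit: the dual certificate $Z$ lives in the copy of $R$ carrying the coefficients of $f$, while the primal witness $M(\xi,y^*)$ lives in the copy carrying $\xi$, and the ``pairing'' you allude to is $\mathrm{tr}\bigl((M(\xi,y^*)\otimes1)(1\otimes Z)\bigr)$ in $R\otimes R$. Decomposing each block of $Z$ and of $M(\xi,y^*)$ into rank-one pieces then yields squares of elements $\sum_p(v_k)_p\otimes(w_l)_p$ of tensor rank at most the block size. One point you gloss over is that the existence of an exact dual certificate $Z$ requires a regularity hypothesis (e.g.\ a Slater point), which must be verified or circumvented.

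For sufficiency you are, as you openly acknowledge, essentially deferring to the reference. Your description of how a single sos decomposition of $f^\otimes(\xi)$ ``yields a $d\times d$ symmetric block contributing a spectrahedral shadow containing $K$'' is not how the argument in \cite{sch:sxdeg} runs---one does not assemble the LMI one block per witness---and the passage from infinitely many pointwise certificates to a single finite lifted LMI is the substantive content of \cite[Thm.~3.10]{sch:sxdeg}. Since the paper itself offers nothing here beyond the citation, there is no discrepancy with the paper's treatment; but be aware that the real work lies in this finiteness step, not merely in invoking Tarski transfer.
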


\begin{lab}\Label{tangenten}%
Let $K\subset\R^n$ be a closed convex \sa\ set, and let $P$ be the
cone of linear polynomials $f$ in $\R[x]=\R[x_1,\dots,x_n]$ with
$f|_K\ge0$. We assume that $K$ has non-empty interior, so the cone
$P$ is closed and pointed. Assume further that the boundary of $K$
is Nash-smooth (\ref{dfnashsmoth}). Given $v\in\partial K$, let
$T_v\in P$ be a positive tangent to $\partial K$ at $v$ (see
\ref{dfnashsmoth}).
Up to scaling by a positive real number, $T_v$ is the only element in
$P$ that vanishes at~$v$.
If the boundary $\partial K$ has strict positive curvature
everywhere, then $T_v(v')>0$ holds for any two different points
$v\ne v'$ in $\partial K$.
\end{lab}

\begin{lem}\Label{xrays}%
Assume that $K$ is compact convex \sa\ with Nash-smooth boundary.
Then the extreme rays of the cone $P$ are the $\R_\plus T_v$, for
$v\in\partial K$. In particular, $P$ is the conic hull of
$\{T_v\colon v\in\partial K\}$.
\end{lem}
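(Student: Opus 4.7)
The key input is the uniqueness statement recorded in \ref{tangenten}: since $\partial K$ is Nash-smooth at every boundary point, $K$ has a unique supporting hyperplane at each $v\in\partial K$, so $T_v$ is, up to positive scalar, the only element of $P$ that vanishes at $v$. My plan is to extract the characterization of extreme rays directly from this uniqueness, and then to conclude that $P$ is their conic hull via the finite-dimensional Minkowski theorem for closed pointed convex cones.

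For one direction, that every $\R_\plus T_v$ is extreme: if $T_v=f_1+f_2$ with $f_1,f_2\in P$, then evaluating at $v$ and using $f_i|_K\ge 0$ forces $f_1(v)=f_2(v)=0$, so the uniqueness gives $f_1,f_2\in\R_\plus T_v$. For the converse, let $f\in P\setminus\{0\}$ generate an extreme ray, and set $m=\min_K f\ge 0$ (attained by compactness of $K$). If $m>0$ and $f$ is non-constant, then $f=m+(f-m)$ is a decomposition inside $P$ in which the constant summand is not a scalar multiple of $f$, contradicting extremality. If instead $f$ is itself a positive constant $c$, then picking any nonzero linear form $\ell$ (bounded on $K$) and a sufficiently small $\epsilon>0$, the decomposition $c=(c/2+\epsilon\ell)+(c/2-\epsilon\ell)$ lies in $P+P$ with neither summand constant, again contradicting extremality. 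Hence $m=0$, and $f$ vanishes at some point of $K$. An affine function that is non-negative on $K$ and vanishes at an interior point of $K$ must be identically zero, so the vanishing in fact occurs at some $v\in\partial K$, and uniqueness gives $f\in\R_\plus T_v$.

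For the final assertion one verifies that $P$ is closed (routine) and pointed (any $f\in P\cap(-P)$ vanishes on $K$, and since $K$ has non-empty interior this forces $f=0$). A closed pointed convex cone in the finite-dimensional space of affine-linear polynomials in $\R[x]$ has a compact base and is therefore the conic hull of its extreme rays; by the previous step, those rays are precisely the $\R_\plus T_v$ with $v\in\partial K$. The only slightly delicate step I anticipate is the need to rule out positive constants in the converse direction, the one case not handled by the uniqueness of the supporting hyperplane; everything else reduces directly to Nash-smoothness via \ref{tangenten}.
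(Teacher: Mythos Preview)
Your proof is correct and follows essentially the same route as the paper: use the uniqueness from \ref{tangenten} for both implications, then invoke the Minkowski theorem for closed pointed cones. The only difference is that where the paper tersely asserts ``there is a point $v\in\partial K$ with $t(v)=0$ since $K$ is compact,'' you spell out the argument (subtracting the minimum, and ruling out positive constants) in full; this extra care is justified and fills in what the paper leaves implicit.
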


\begin{proof}
Let $v\in\partial K$. Then $T_v$ generates an extreme ray of $P$,
since $T_v=p+q$ with $p,\,q\in P$ implies $p(v)=0$ and therefore
$p\in\R_\plus T_v$, see \ref{tangenten}.
Conversely, if $t\in P$ generates an extreme ray of $P$, there is a
point $v\in\partial K$ with $t(v)=0$ since $K$ is compact. Therefore
$t\in\R_\plus T_v$, again by \ref{tangenten}. Every closed and
pointed convex cone is the Minkowski sum of its extreme rays, which
implies the last assertion.
\end{proof}

\begin{lab}\Label{suffshow}%
To prove Theorem \ref{main1}, it therefore suffices (by Theorem
\ref{critenseval}) to show: For any real closed field $R\supset\R$
and any $R$-points $\xi,\,\eta\in\partial K_R$, the tensor evaluation
of every positive tangent $T_\eta$ at $\xi$ satisfies
$\sosx T_\eta^\otimes(\xi)\le2$.

This can be narrowed down further. For clarity we will sometimes
write $K_\R$ in the following, instead of~$K$.
Let $\scrO_R=\{a\in R\colon\ex n\in\N$ with $|a|<n\}$, the canonical
valuation ring of $R$, and let $\scrO_R\to\R$, $a\mapsto\ol a$ denote
the residue map. For $\xi\in\scrO_R^n$ we call $\ol\xi\in\R^n$ the
\emph{specialization} of $\xi$ in $\R^n$. If $K\subset\R^n$ is \sa\
and $\xi\in K_R\cap\scrO_R^n$, then $\ol\xi\in\R^n$ lies in the
closure of $K_\R=K$. In particular, when $K$ is compact, the base
field extension $K_R$ of $K$ is contained in $\scrO_R^n$ and
satisfies $\ol\xi\in K_\R$ for every $\xi\in K_R$.

Now assume that $K$ is compact convex with Nash-smooth boundary of
strict positive curvature. Then it suffices to consider criterion
\ref{critenseval} in the case where the specializations of $\xi$ and
$\eta$ agree:
\end{lab}

\begin{lem}\Label{spezndiff}%
Let $R\supset\R$ be real closed.
If $\xi,\,\eta\in\partial K_R$ are such that $\ol\xi\ne\ol\eta$ in
$\partial K_\R$, then $\sosx T_\eta^\otimes(\xi)=1$.
\end{lem}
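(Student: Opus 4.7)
My proof plan is to establish both $\sosx T_\eta^\otimes(\xi)\ge1$ and $\sosx T_\eta^\otimes(\xi)\le1$. For the lower bound, applying the multiplication map $\pi\colon R\otimes_\R R\to R$, $a\otimes b\mapsto ab$, sends $T_\eta^\otimes(\xi)$ to $T_\eta(\xi)$; by \ref{tangenten}, since $\ol\xi\ne\ol\eta$ are distinct boundary points of a set with strict positive curvature, $T_{\ol\eta}(\ol\xi)>0$ strictly. Thus $T_\eta(\xi)\in R$ specializes to a strictly positive real, is non-infinitesimal positive and in particular nonzero, so $T_\eta^\otimes(\xi)\ne0$ and $\sosx\ge1$.

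For the upper bound I must decompose $T_\eta^\otimes(\xi)=\sum_j p_j\otimes q_j$ with $p_j,q_j\ge0$ in $R$; each such summand equals $(\sqrt{p_j}\otimes\sqrt{q_j})^2$ by the real-closedness of $R$. Writing $T_\eta=\sum_i\alpha_i(x_i-\eta_i)$ with $\alpha_i=\partial_i h(\eta)$ gives $T_\eta^\otimes(\xi)=\sum_i\xi_i\otimes\alpha_i-1\otimes(\alpha\cdot\eta)$. After lifting to the étale cover of \ref{nashetalpoly} to replace the local Nash $h$ by a polynomial, and changing coordinates on $\R^n$ by translation and orthogonal transformation to arrange $\ol\eta=0$ and $\nabla h(\ol\eta)=e_n$, one has $\ol\alpha_n>0$ and $\ol\xi_n=T_{\ol\eta}(\ol\xi)>0$ non-infinitesimal, while $\ol\alpha_i=0$ for $i<n$ and $\ol\eta_j=0$ for all $j$ are infinitesimal. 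The term $\xi_n\otimes\alpha_n$ is then a positive simple tensor of non-infinitesimal multiplication value, and all other summands have infinitesimal multiplication values.

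I then split each $\xi_i$, $\alpha_i$ and $\alpha\cdot\eta$ into positive and negative parts $r=r^+-r^-$ (available since $R$ is real closed), expand each $\xi_i\otimes\alpha_i$ into its four sign-split simple tensors, and absorb the resulting negative simple contributions into the reserve $\xi_n\otimes\alpha_n$ using the identity $r\otimes s-\sum_i p_i\otimes q_i=(r-\sum_i p_i)\otimes s+\sum_i p_i\otimes(s-q_i)$, valid whenever $r\ge\sum_i p_i\ge0$ and $s\ge q_i\ge0$ in $R$. I expect the main obstacle to be verifying that the reserve suffices in all sign configurations, since the factor $\xi_n$ can be smaller in size than some of the non-infinitesimal $\xi_i^\pm$; in those cases one must subdivide the reserve or reorganise the sum so that each absorption step respects the required bounds. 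Once every negative contribution has been absorbed, $T_\eta^\otimes(\xi)$ is exhibited as a sum of positive simple tensors, giving $\sosx\le1$ and hence $\sosx=1$.
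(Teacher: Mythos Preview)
Your lower-bound argument is fine and coincides with the paper's. For the upper bound, however, you are taking a much longer route than necessary, and the decisive step is left genuinely incomplete.

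The paper's proof is essentially three lines: after scaling $T_\eta$ so that its coefficients lie in $\scrO_R$ with $\ol{T_\eta}\ne0$, the residue $\ol{T_\eta}$ is a positive tangent to $K_\R$ at $\ol\eta$, hence $\ol{T_\eta}(\ol\xi)>0$ by \ref{tangenten}. Therefore $T_\eta^\otimes(\xi)\in\scrO_R\otimes\scrO_R$ has strictly positive residue in $\R\otimes\R=\R$, and \cite[Prop.~3.5]{sch:sxdeg} immediately gives $\sosx=1$. What you are attempting is, in effect, a by-hand reproof of that cited proposition in this particular instance; that is a legitimate route, but it duplicates work already packaged as a black box.

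Two concrete issues with your plan. First, the \'etale lift of \ref{nashetalpoly} is a red herring here: that construction is local at a \emph{single} specialization point and is invoked in the paper only later, in the case $\ol\xi=\ol\eta$. For $\ol\xi\ne\ol\eta$ there is no common point to lift around, and in any case the coefficients $\alpha_i=\partial_ih(\eta)\in R$ are already well-defined by base-changing the Nash function~$h$ to~$R$; no polynomial replacement is needed to form $T_\eta$. Second, and more seriously, your absorption identity requires $r\ge\sum_ip_i$ and $s\ge q_i$, and you yourself observe that the first inequality can fail since some $\xi_i^\pm$ may exceed $\xi_n$. Saying that ``one must subdivide the reserve or reorganise the sum'' is not a proof. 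The standard repair is to rescale each negative simple tensor as $p\otimes q=(\lambda p)\otimes(q/\lambda)$: since the product $pq$ is infinitesimal while $rs=\xi_n\alpha_n$ has strictly positive residue, suitable scaling factors exist making both required bounds hold simultaneously for all terms at once. Until you actually carry out this (or an equivalent) argument, the inequality $\sosx\le1$ is not established.
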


\begin{proof}
First note that $(aT_\eta)^\otimes(\xi)=(1\otimes a)\cdot
T_\eta^\otimes(\xi)$ for any $a\in R$. So scaling $T_\eta$ by a
positive number in $R$ doesn't change $\sosx T_\eta^\otimes(\xi)$.
Therefore we may assume that $T_\eta\in R[x]$ has coefficients in
$\scrO_R$ and that $\ol{T_\eta}\ne0$ in $\R[x]$. Then $\ol{T_\eta}$
is a positive tangent to $K_\R$ at $\ol\eta\in\partial K_\R$. Since
$\ol\xi\ne\ol\eta$ are different boundary points of $K_\R$, we have
$\ol{T_{\eta}}({\ol\xi})>0$, as remarked in \ref{tangenten}.
So $T_\eta^\otimes(\xi)$ is a tensor in
$\scrO_R\otimes\scrO_R\subset R\otimes R$ whose residue in
$\R\otimes\R=\R$ is strictly positive. By
\cite[Prop.\ 3.5]{sch:sxdeg} this implies
$\sosx T_\eta^\otimes(\xi)=1$.
\end{proof}

\begin{lab}\Label{etalift}%
According to Lemma \ref{spezndiff}, we only need to study
$T_\eta^\otimes(\xi)$ in the case where $\xi,\,\eta\in\partial K_R$
have the same specialization $v$ in $K_\R$. Let $\pi\colon X\to\A^n$
be an \'etale morphism as in \ref{nashetalpoly}, with $\pi(u)=v$, and
let $g\in\R[X]$ be the lift of a positive Nash inequality for $K$ at
$v$. Moreover write $Y=\scrV_X(g)$ as in \ref{nashetalpoly}. Given
$\xi,\,\eta\in\partial K_R$ with $\ol\xi=\ol\eta=v$, let
$\xi',\,\eta'$ be their lifts to $Y(R)$ with
$\ol{\xi'}=\ol{\eta'}=u$. Finally let $t_{\eta'}\in R[X]$ be the lift
of $T_\eta\in\R[x]$ as in \eqref{pullbackT2t}. Then
\begin{equation}\Label{obsetalift}%
t_{\eta'}^\otimes(\xi')\>=\>T_\eta^\otimes(\xi)
\end{equation}
holds in $R\otimes R$, which means that the tensor evaluation of
$T_\eta$ at $\xi$ can be calculated as tensor evaluation of
$t_{\eta'}$ at $\xi'$ on the \'etale cover $X$.
Indeed, \eqref{obsetalift} follows from the commutative triangle
$$\begin{tikzcd}[column sep=large,row sep=small]
\R[X]\otimes R \arrow{dr}{\xi'\otimes\id} \\
& R\otimes R \\
\R[x]\otimes R \arrow[swap]{ur}{\xi\otimes\id}
  \arrow{uu}{\pi^*\otimes\id}
\end{tikzcd}$$
since $(\pi^*\otimes\id)(T_\eta)=t_{\eta'}$ by the definition of
$t_{\eta'}$ in \ref{nashetalpoly}.

Recall that we need to show that the tensor \eqref{obsetalift} has
sosx-invariant $\le2$, for $\xi,\,\eta\in\partial K_R$ with
$\ol\xi=\ol\eta=v$. Using the observation just made, we'll work with
the tensor $t^\otimes_{\eta'}(\xi')$ on $X$ from now on. In doing
this we'll drop the ``prime'' notation when denoting points in
$X(R)$, so we'll write $\xi$ and $\eta$ for what was $\xi'$ and
$\eta'$ before.
In order to get an expression of the tangents $t_\eta$ that is
uniform in $\eta$, we'll be working on $X\times X$.
\end{lab}

\begin{lab}\Label{tu}%
Let $X$ be a smooth affine $\R$-variety and let $A=\R[X]$. Since our
discussion is essentially local in nature (on $X$), we may assume
that $x_1,\dots,x_n\in A$ are such that the $A$-module
$\Omega_A=\Omega_{A/\R}$ of K\"ahler differentials is freely
generated by $dx_1,\dots,dx_n$. Fix an element $g\in A$ and write
$Y=\scrV_X(g)$. Write $\partial_i=\frac\partial{\partial x_i}$, so
the differential of $g$ in $\Omega_A$ is
$dg=\sum_{i=1}^n(\partial_ig)\,dx_i$. A point $u\in Y(\R)$ is a
nonsingular point of $Y$ if $(\partial_ig)(u)\ne0$ for some
index~$i$. Given such $u$, write
$t_u=\sum_{i=1}^n(\partial_ig)(u)\cdot(x_i-x_i(u))\in A$, cf.\
\ref{poscurvequiv} and \ref{nashetalpoly}.
\end{lab}

\begin{lab}\Label{atensora}%
The ring $A\otimes A=A\otimes_\R A$ will be considered as an
$A$-algebra via the second tensor component. So we write
$p\cdot\theta:=(1\otimes p)\cdot\theta$ for $p\in A$ and
$\theta\in A\otimes A$. Let the map $\delta\colon A\to A\otimes A$ be
given by $\delta(p)=p\otimes1-1\otimes p$, and let $I$ denote the
kernel of the product map $A\otimes A\to A$. The ideal $I$ of
$A\otimes A$ is generated by the elements $\delta(p)$ ($p\in A$). The
$A$-module $\Omega_A$ is naturally isomorphic to
$I/I^2$, via $\Omega_A\isoto I/I^2$, $dp\mapsto\delta(p)+I^2$.
Consider the tensor
\begin{equation}\Label{dfnt}%
t\>:=\>\sum_{i=1}^n(\partial_ig)\cdot\delta(x_i)
\end{equation}
in $I$. The class of $t$ in $I/I^2=\Omega_A$ is $dg$. The tensor $t$
parametrizes the ``tangents'' $t_u$ in the sense that, for
$u\in Y(\R)$, one has $t_u=\varphi_u(t)$ where
$\varphi_u\colon A\otimes A\to A$ is the homomorphism
$p\otimes q\mapsto p\cdot q(u)$.

Write $B=\R[Y]=A/\idl g$, and let $J$ be the kernel of the product
map $B\otimes B\to B$. By $\tau$ we denote the image of $t$ under the
natural epimorphism $A\otimes A\to B\otimes B$. Since
$dg\in\Omega_X\cong I/I^2$ restricts to zero in
$\Omega_Y\cong J/J^2$, the element $\tau$ lies in $J^2$.

The point for our discussion is this: Given $R\supset\R$ real closed,
and given $\xi,\,\eta\in Y(R)$,
tensor evaluation $t_\eta^\otimes(\xi)$ is the image of the tensor
$\tau\in J^2\subset B\otimes B$ under the homomorphism
\begin{equation}\Label{xieta}%
\xi\otimes\eta\colon B\otimes B\to R\otimes R,\quad
p\otimes q\mapsto p(\xi)\otimes q(\eta).
\end{equation}
Now combine this remark with Lemma \ref{spezndiff} and Observation
\ref{etalift}. Since the strict positive curvature assumption for
$\partial K$ implies that the Hessian of $\tau_u$ at $u$ is positive
definite (see \ref{nashetalpoly}), it suffices for the proof of
Theorem \ref{main1} to establish the following proposition:
\end{lab}

\begin{prop}\Label{suff2show}%
Let $X$ be an affine $\R$-variety, put $A=\R[X]$ and assume that
$\Omega_A$ is freely generated by $dx_1,\dots,dx_n$. Let $g\in A$,
put $Y=\scrV_X(g)$ and $B=\R[Y]=A/\idl g$, and let $\tau$ be the
image of
$t=\sum_{i=1}^n(\frac{\partial g}{\partial x_i})\,\delta(x_i)$ in
$B\otimes B$. Let $u\in Y(\R)$ be a smooth point of $Y$, let
$\tau_u\in B$ be the image of $\tau$ under
$\varphi_u\colon B\otimes B\to B$, $p\otimes q\mapsto p\cdot q(u)$,
and assume that the Hessian of $\tau_u$ at the point $u$ is positive
definite. Then, for $R\supset\R$ real closed and any points
$\xi,\,\eta\in Y(R)$ with $\ol\xi=\ol\eta=u$, the image of $\tau$
under $\xi\otimes\eta\colon B\otimes B\to R\otimes R$ satisfies
$\sosx\,(\xi\otimes\eta)(\tau)\le2$.
\end{prop}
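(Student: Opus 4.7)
Plan. The plan is to exhibit a sum-of-squares decomposition $\tau=\sum_k\phi_k^2$ in a Nash localization of $B\otimes B$ near $(u,u)\in Y\times Y$, with each $\phi_k$ of tensor rank $\le 2$; applying $\xi\otimes\eta$ to such an identity then yields $\sosx\,(\xi\otimes\eta)(\tau)\le 2$, because the ring homomorphism $\xi\otimes\eta\colon B\otimes B\to R\otimes R$ preserves sums of squares and sends a rank-$\le 2$ element of $B\otimes B$ to a rank-$\le 2$ tensor in $R\otimes R$.

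First I would use Hadamard's lemma: Taylor-expanding $g$ at the second-factor variable produces symmetric polynomials $r_{ij}\in A\otimes A$ with diagonal values $r_{ij}(u,u)=\tfrac12\partial_i\partial_jg(u)$, and an identity
\[
\delta(g) \,=\, \sum_i(1\otimes\partial_ig)\,\delta(x_i)+\sum_{i,j}r_{ij}\,\delta(x_i)\delta(x_j)
\]
in $A\otimes A$. Taking the image in $B\otimes B$, where $\delta(g)=0$, and using the definition of $t$, we obtain
\[
\tau \,=\, -\sum_{i,j}r_{ij}\,\delta(x_i)\delta(x_j) \quad\text{in } B\otimes B.
\]
By~\ref{nashetalpoly}, the restriction of the symmetric bilinear form $-2\,r_{ij}(u,u)$ to the tangent space $T_uY$ is the Hessian of $\tau_u$ at $u$, and hence is positive definite.

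Next I would localize at $(u,u)$: choose a regular system of parameters $z_1,\dots,z_{n-1}\in B$ at $u$ and rewrite $\tau$ in a Nash neighborhood of $(u,u)\in Y\times Y$ as
\[
\tau \,=\, \sum_{i,j=1}^{n-1}Q_{ij}(y,z)\,\delta(z_i)\delta(z_j),
\]
with Nash coefficients $Q_{ij}$ (where the letters $y,z$ denote the two factors of $Y\times Y$) and $Q(u,u)$ positive definite; by continuity, $Q(y,z)$ stays positive definite on some Nash neighborhood of $(u,u)$.

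The main technical step --- and the main obstacle --- is to refine this positive-definite quadratic-form expression into a sum of squares whose squared factors have tensor rank $\le 2$ in $B\otimes B$. A direct Nash-Cholesky factorization $Q=M^TM$ only gives $\tau=\sum_k\bigl(\sum_i M_{ki}(y,z)\,\delta(z_i)\bigr)^2$, and since the factors $M_{ki}$ depend on both sides these squared summands generically have high tensor rank. I would overcome this by using a Nash Morse lemma to diagonalize the Hessian of $\tau_u$ via a suitable choice of the parameters $z_i$, and then decomposing $Q(y,z)$ itself as a sum of squares of ``simple'' Nash elements (constants, one-sided factors $p\otimes 1$, certain symmetric pieces), for which the products with the $\delta(z_i)$ stay of tensor rank $\le 2$ by virtue of cancellation identities such as $(p\otimes 1)\,\delta(z_i)=pz_i\otimes 1-p\otimes z_i$. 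Packaging these as the squared factors $\phi_k$ yields $\tau=\sum_k\phi_k^2$ with $\rk\phi_k\le 2$; applying $\xi\otimes\eta$ finishes the proof.
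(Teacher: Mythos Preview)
Your Hadamard expansion and the identification of $\tau$ as a quadratic form in the $\delta(z_i)$ with positive-definite value at $(u,u)$ are correct and match the paper's setup. But the step you flag as ``the main obstacle'' is a genuine gap. You aim for a decomposition $\tau=\sum_k\phi_k^2$ with each $\phi_k$ of tensor rank $\le 2$ in (a Nash localization of) $B\otimes B$, and your proposed fix---a Nash Morse lemma plus decomposing $Q(y,z)$ into squares of ``simple'' pieces---does not go through. A Morse-type change of parameters only diagonalizes the one-variable function $\tau_u$ on $Y$, not the two-sided coefficient matrix $Q(y,z)$ on $Y\times Y$; even after arranging $Q(u,u)=I$, the off-diagonal and higher-order parts of $Q_{ij}(y,z)$ are genuinely two-sided. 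Your cancellation identity $(p\otimes 1)\,\delta(z_i)=pz_i\otimes1-p\otimes z_i$ only controls rank for one-sided coefficients, and there is no evident way to write a general positive Nash element of $B\otimes B$ as a sum of squares of rank-$\le 2$ tensors.

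The paper avoids this obstacle by asking for less. Rather than a rank-bounded SOS identity, Lemma~\ref{schweregeburt} produces $\tau=\sum_\nu\gamma_\nu\,\delta(z_\nu)^2$ in $B\otimes B$ with $z_\nu\in B$ and coefficients $\gamma_\nu\in B\otimes B$ of \emph{unrestricted} tensor rank, subject only to $\gamma_\nu(u,u)>0$. (This is achieved by diagonalizing modulo $J^3$ after subtracting an $\epsilon$-multiple of $\sum\delta(z_i)^2$, then absorbing the $J^3$ remainder using $J^3=\sum_\nu J\,\delta(z_\nu)^2$.) Applying $\xi\otimes\eta$, each $(\xi\otimes\eta)(\gamma_\nu)$ lies in $\scrO_R\otimes\scrO_R$ with strictly positive residue in $\R$, and \cite[Prop.~3.5]{sch:sxdeg} gives $\sosx\,(\xi\otimes\eta)(\gamma_\nu)=1$ directly---no rank bound on $\gamma_\nu$ is ever needed. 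Since $(\xi\otimes\eta)(\delta(z_\nu))$ has rank $\le 2$, this yields $\sosx\,(\xi\otimes\eta)(\tau)\le 2$. The idea you are missing is precisely that positivity of the residue at $(u,u)$, via \cite[Prop.~3.5]{sch:sxdeg}, forces $\sosx=1$ on the coefficients \emph{after} evaluation, so controlling their tensor rank in $B\otimes B$ is unnecessary.
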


The technical key step in the proof of Proposition \ref{suff2show}
will be the next lemma:

\begin{lem}\Label{schweregeburt}%
Assume the hypotheses of Proposition \ref{suff2show}, and let
$\Phi_u\colon B\otimes B\to\R$, $p\otimes q\mapsto p(u)q(u)$ denote
evaluation at~$u$. Then there is an identity
\begin{equation}\Label{ident4tau}%
\tau\>=\>\sum_\nu\gamma_\nu\cdot\delta(z_\nu)^2
\end{equation}
in $B\otimes B$ with $z_\nu\in B$ and $\gamma_\nu\in B\otimes B$,
such that $\Phi_u(\gamma_\nu)>0$ holds for every~$\nu$.
\end{lem}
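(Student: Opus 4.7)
My plan is to establish \eqref{ident4tau} by combining an exact Taylor expansion of $g$ about the diagonal of $X\times X$ with a Cholesky-type sum-of-squares decomposition driven by the positive-definiteness hypothesis.

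First, I would apply Taylor's formula with polynomial remainder: write $g(y)-g(x)=\sum_i(y_i-x_i)(\partial_ig)(x)+\sum_{i,j}(y_i-x_i)(y_j-x_j)\,h_{ij}(x,y)$ with symmetric $h_{ij}\in\R[x,y]$ satisfying $h_{ij}(x,x)=\tfrac12(\partial_i\partial_jg)(x)$, substitute $x_i\otimes1$ and $1\otimes x_i$ for $x_i,y_i$ to obtain $\delta(g)=t+\sum_{i,j}h_{ij}^{\otimes}\delta(x_i)\delta(x_j)$ in $A\otimes A$, and pass to $B\otimes B$ (where $\delta(g)=0$) to obtain the exact identity
\[
\tau\;=\;-\sum_{i,j}H_{ij}\,\delta(x_i)\delta(x_j)
\]
with $H_{ij}\in B\otimes B$ and $H_{ij}(u,u)=\tfrac12(\partial_i\partial_jg)(u)$. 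The scalar matrix $M_0:=-(H(u,u))=-\tfrac12\Hess_g(u)$ is then positive definite on $T_uY=\{v\in\R^n:\sum v_i(\partial_ig)(u)=0\}$ by the hypothesis.

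Next I would build a principal SOS decomposition of $M_0|_{T_uY}$ over a redundant family of linear forms. Pick a basis $c_1,\dots,c_d$ of $T_uY$ diagonalizing $M_0|_{T_uY}=\sum_\nu\lambda_\nu c_\nu c_\nu^t$ with $\lambda_\nu>0$, and enlarge the family by $c_\nu\pm c_\mu$ for $\nu<\mu$ with a small common weight $\epsilon>0$; the cancellation $(c_\nu+c_\mu)(c_\nu+c_\mu)^t+(c_\nu-c_\mu)(c_\nu-c_\mu)^t=2(c_\nu c_\nu^t+c_\mu c_\mu^t)$ then yields
\[
M_0|_{T_uY}\;=\;\sum_\nu\alpha_\nu\,c_\nu c_\nu^t+\sum_{\nu<\mu}\epsilon\bigl((c_\nu+c_\mu)(c_\nu+c_\mu)^t+(c_\nu-c_\mu)(c_\nu-c_\mu)^t\bigr)
\]
with each $\alpha_\nu=\lambda_\nu-2\epsilon(d-1)>0$ for $\epsilon$ small. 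Setting $z_\nu:=c_\nu\cdot(x-x(u))\in B$ (and similarly $z_\nu\pm z_\mu$), I claim the difference
\[
R\;:=\;\tau-\sum_\nu\alpha_\nu\delta(z_\nu)^2-\sum_{\nu<\mu}\epsilon\bigl(\delta(z_\nu+z_\mu)^2+\delta(z_\nu-z_\mu)^2\bigr)
\]
lies in $\m_{(u,u)}J^2$: the entries of $H-H(u,u)$ lie in $\m_{(u,u)}$, and the ``normal'' part of $M_0$ not captured by $c_\nu\in T_uY$ yields tensors involving $\delta(z_0)$ with $z_0:=v\cdot(x-x(u))$, $v:=\nabla g(u)$; since $g$ vanishes on $Y$ one has $z_0\in\m_u^2B$, hence $\delta(z_0)\in\m_{(u,u)}J$, so all such contributions lie in $\m_{(u,u)}J^2$.

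Finally I would absorb $R\in\m_{(u,u)}J^2$ into coefficient corrections in $\m_{(u,u)}$ (which evaluate to $0$ at $(u,u)$ and so preserve strict positivity). Since the $c_\nu$ form a basis of $T_uY$, the $z_\nu$ are a regular system of parameters on $Y$ at $u$, so locally at $(u,u)$ the $\delta(z_\nu)$ generate $J$ and, by polarization, $\{\delta(z_\nu)^2,\,\delta(z_\nu\pm z_\mu)^2\}$ generates $J^2$ as a $(B\otimes B)_{(u,u)}$-module; Nakayama's lemma then yields a local representation $R=\sum_i\rho_i\delta(w_i)^2$ with $w_i$ in the family and $\rho_i\in\m_{(u,u)}$. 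The main obstacle will be upgrading this local identity to a global one in $B\otimes B$; I would handle this by replacing $X$ with an étale neighborhood of $u$. Although Lemma \ref{schweregeburt} is stated for fixed $X$, it is only used to bound tensor evaluations of $\tau$, which are compatible with étale base change (cf.\ \ref{etalift}), so it suffices to establish \eqref{ident4tau} on a sufficiently small étale cover, on which the $z_\nu$ become global parameters on $Y$ and the local representation of $R$ becomes global. Combining then gives $\tau=\sum_\nu(\alpha_\nu+\rho_\nu)\delta(z_\nu)^2+\sum_{\nu<\mu}(\epsilon+\rho^{+}_{\nu\mu})\delta(z_\nu+z_\mu)^2+\sum_{\nu<\mu}(\epsilon+\rho^{-}_{\nu\mu})\delta(z_\nu-z_\mu)^2$ with every coefficient strictly positive at $(u,u)$, yielding \eqref{ident4tau}.
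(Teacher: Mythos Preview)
Your argument is essentially correct and runs parallel to the paper's; the Taylor expansion yielding $\tau=-\sum H_{ij}\,\delta(x_i)\delta(x_j)$ is a pleasant explicit version of the abstract fact $\tau\in J^2$ that the paper uses, and your $\epsilon$-redundancy/polarization trick is the same device the paper employs. The one structural difference is your choice of the family $\{z_\nu\}$. You take $z_1,\dots,z_d$ to come from a \emph{basis} of $T_uY$, which makes $M_0|_{T_uY}$ positive definite but leaves the $\delta(z_\nu)$ generating $J$ only \emph{locally} at $(u,u)$; this is what forces you into the \'etale workaround at the end. The paper instead picks $y_1,\dots,y_r\in\m_u$ to be translates of a set of algebra generators of $B$, so that $\delta(y_1),\dots,\delta(y_r)$ generate $J$ \emph{globally} in $B\otimes B$. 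Then the enlarged family $z_i\in\{y_i,\,y_i\pm y_j\}$ satisfies $J^2=\sum(B\otimes B)\,\delta(z_i)^2$ and hence $J^3=\sum J\,\delta(z_i)^2$ globally, and the remainder (which the paper arranges to lie in $J^3$, your $\m_{(u,u)}J^2$ would work equally well) is absorbed without any localization. The price is that the matrix $(p_{ij}(u))$ is now only positive \emph{semidefinite} (the $y_i$ are linearly dependent in $\m_u/\m_u^2$), which is exactly why the paper subtracts the $\epsilon$-correction \emph{before} diagonalizing rather than after, so that the diagonal entries come out $\ge0$ and adding $\epsilon$ back makes them strictly positive.

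Your \'etale workaround is legitimate for the application (lifting $\xi,\eta$ along a further \'etale cover via Hensel over $\scrO_R$ and repeating the computation of \ref{etalift}), but it proves a slightly weaker statement than the lemma as written. If you simply swap your minimal parameter system for a full generating family as above, your proof becomes global in $B\otimes B$ and matches the lemma verbatim.
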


\begin{proof}
Let $\m_u$ denote the maximal ideal of $B$ at $u$. Since $Y$ is
smooth at $u$, the local ring $B_{\m_u}$ is regular (of dimension
$n-1$). Consider the evaluation map
$\varphi_u\colon B\otimes B\to B$, $p\otimes q\mapsto p\cdot q(u)$
(for $p,\,q\in B$). Then $\varphi_u(J)=\m_u$,
so $\varphi_u$ induces
a map $\ol\varphi_u\colon J^2/J^3\to\m_u^2/\m_u^3$. It corresponds to
sending a coset in $J^2/J^3$ to its value at the point $u$, which
is a symmetric bilinear form on the tangent space $(\m_u/\m_u^2)^\du$
of $Y$ at~$u$. By saying that a tensor $\alpha\in J^2$ is positive
definite at $u$, we mean that
$\ol\varphi_u(\alpha+J^3)\in\m_u^2/\m_u^3$ is positive definite, as a
symmetric bilinear form on $(\m_u/\m_u^2)^\du$.
With this terminology, the hypothesis in \ref{suff2show} on the
Hessian of $\tau_u$ says that the tensor $\tau$ is positive definite
at~$u$.

Let $y_1,\dots,y_r\in B$ be a finite sequence of elements in $\m_u$
such that $J$ is generated by $\delta(y_1),\dots,\delta(y_r)$, as an
ideal in $B\otimes B$.
Let $z_1,\dots,z_m$ (with $m=r^2$) be the sequence consisting of the
elements $y_i$ and $y_i\pm y_j$ (for $1\le i<j\le r$), in some order.
Then the ideal $J^2$ is generated by
$\delta(z_1)^2,\dots,\delta(z_m)^2$.
Fix a real number $\epsilon>0$ for which the tensor
\begin{equation}\Label{taucorr}%
\tau'\>:=\>\tau-\epsilon\sum_{i=1}^m\delta(z_i)^2
\end{equation}
is still positive definite at $u$.
Let $p_{ij}=p_{ji}$ ($1\le i,j\le r$) be elements in $B$ with
\begin{equation}\Label{taudarst}%
\tau'\>\equiv\>\sum_{i,j=1}^rp_{ij}\,\delta(y_i)\delta(y_j)
\quad (\text{mod }J^3),
\end{equation}
and write $a_{ij}=p_{ij}(u)$ for $1\le i,j\le r$. By construction we
have $\varphi_u(\tau')\equiv\sum_{i,j=1}^ra_{ij}y_iy_j$
(mod~$\m_u^3$), as elements in $\m_u^2/\m_u^3$. The real symmetric
$r\times r$-matrix $M=(a_{ij})_{i,j}$ is positive semidefinite.
So there exists an invertible real $r\times r$-matrix $S=(s_{ij})$
such that $S^tMS$ is a diagonal matrix with nonnegative entries. Let
$\tilde y_1,\dots,\tilde y_r\in\m_u$ be defined by
$y_i=\sum_{j=1}^rs_{ij}\tilde y_j$ ($i=1,\dots,r$). This gives
$$\varphi_u(\tau')\>\equiv\>\sum_{j=1}^rb_j\tilde y_j^2\quad
(\text{mod }\m_u^3)$$
with real numbers $b_j\ge0$.
In other words, using this linear coordinate change we may assume
from the beginning that the $p_{ij}$ in \eqref{taudarst} satisfy
$p_{ij}(u)=0$ for $i\ne j$ and $p_{ii}(u)\ge0$ for all~$i$. With
this assumption, rewrite \eqref{taudarst} as
\begin{equation}\Label{eqref2}%
\tau'\>\equiv\>\sum_{i=1}^rp_{ii}\delta(y_i)^2+\frac14
\sum_{i\ne j}p_{ij}\cdot\bigl((\delta(y_i+y_j)^2-\delta(y_i-y_j)^2
\bigr)\quad(\text{mod }J^3).
\end{equation}
By the choice of the $z_i$, the right hand side in \eqref{eqref2} is
a sum $\sum_{i=1}^mp_i\delta(z_i)^2$, where each coefficient
$p_i\in B$ satisfies $p_i(u)\ge0$. Using \eqref{taucorr} we thus
have
\begin{equation}\Label{taurichtig}%
\tau\>=\>\sum_{i=1}^mq_i\delta(z_i)^2+\mu
\end{equation}
with $\mu\in J^3$ and with $q_i\in B$ satisfying
$q_i(u)\ge\epsilon>0$ for all~$i$. Since
$J^3=\sum_{i=1}^mJ\delta(z_i)^2$, we can amalgamate $\mu$ into the
sum and get a representation of $\tau$ as claimed in the proposition.
\end{proof}

\begin{lab}\Label{endofproof}%
Now the proof of Proposition \ref{suff2show} (and therefore of
Theorem \ref{main1}) is readily completed. Let $\xi,\,\eta\in Y(R)$
with $\ol\xi=\ol\eta=u$, and fix an identity \eqref{ident4tau} for
$\tau$ as in \ref{schweregeburt}. Then
$$t_\eta^\otimes(\xi)\>=\>(\xi\otimes\eta)(\tau)\>=\>
\sum_\nu(\xi\otimes\eta)(\gamma_\nu)\cdot\bigl(z_\nu(\xi)\otimes1
-1\otimes z_\nu(\eta)\bigr)^2.$$
For each $\nu$ the real number $\Phi_u(\gamma_\nu)$ is strictly
positive. Therefore the residue of the tensor
$(\xi\otimes\eta)(\gamma_\nu)\in\scrO_R\otimes\scrO_R$ in
$\R\otimes\R=\R$ is strictly positive as well, which implies
$\sosx\,(\xi\otimes\eta)(\gamma_\nu)=1$ by
\cite[Prop.~3.5]{sch:sxdeg}. Now it is obvious that
$\sosx t_\eta^\otimes(\xi)\le2$.
\qed
\end{lab}


\section{Proof of Theorem \ref{main2}}\Label{proof2}%

In this section we are going to derive Theorem \ref{main2} from
Theorem \ref{main1}. First recall some terminology.

\begin{lab}\Label{rzpoly}%
A polynomial $p\in\R[x]=\R[x_1,\dots,x_n]$, not necessarily
homogeneous, is \emph{real-zero} (or an \emph{RZ-polynomial}, or just
\emph{RZ} for short) with respect to $e\in\R^n$ if $p(e)\ne0$ and if,
for every $u\in\R^n$, all complex zeros of the univariate polynomial
$p(te-u)\in\R[t]$ are real. Clearly, this is the inhomogeneous
version of hyperbolic polynomials: If $f\in\R[x]$ is a form that is
hyperbolic with respect to~$e$, the restriction of $f$ to any affine
hyperplane through $e$ is RZ with respect to~$e$. Conversely, if
$p\in\R[x]$ is RZ with respect to~$e$ and $d=\deg(p)$, the
homogenization $p^h(x_0,x)=x_0^d\,p(\frac x{x_0})$ of $p$ will be
hyperbolic with respect to~$(1,e)\in\R^{n+1}$.

If $p$ is real-zero with respect to~$e\in\R^n$, we associate with $p$
the \emph{rigidly convex set}
$$S_e(p)\>:=\>\bigl\{u\in\R^n\colon p((1-t)e+tu)\ne0\text{ for }
0\le t<1\bigr\}.$$
The set $S_e(p)$ is convex and is the closure of the connected
component of $\{x\colon p(x)\ne0\}$ that contains~$e$ (this follows
from the corresponding facts for the hyperbolic form $p^h$ mentioned
in the introduction). The former Lax conjecture can be stated as the
following theorem:
\end{lab}

\begin{thm}\Label{lax}%
\emph{(Helton-Vinnikov \cite{hv,lpr})}
Assume that $p(x_1,x_2)$ is a real-zero polynomial with respect to
$e=(0,0)\in\R^2$ with $p(e)=1$, and let $d=\deg(p)$. Then there exist
real symmetric $d\times d$-matrices $A_1,\,A_2$ such that
$$p(x_1,x_2)\>=\>\det(I+x_1A_1+x_2A_2).$$
\end{thm}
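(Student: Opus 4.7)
The plan is to prove this via the classical geometric approach, treating it as a statement about hyperbolic ternary forms. First I would homogenize: set $P(x_0,x_1,x_2)=x_0^d\,p(x_1/x_0,x_2/x_0)$, a form of degree $d$ that is hyperbolic with respect to $(1,0,0)$. The claim is then equivalent to producing real symmetric $d\times d$ matrices $A_0,A_1,A_2$ with $A_0\succ0$ and $P=\det(x_0A_0+x_1A_1+x_2A_2)$; after conjugating so that $A_0=I$ one recovers the stated form.

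Next I would analyze the real projective plane curve $C\subset\P^2$ defined by $P=0$. The hyperbolicity condition translates into a strong geometric constraint: every real line through the hyperbolicity point meets $C$ in $d$ real points (counted with multiplicity). Consequently the real locus of $C$ consists of $\lfloor d/2\rfloor$ nested ovals surrounding $[1:0:0]$, together with a pseudo-line when $d$ is odd. This ``maximally nested'' topology of the real curve is what ultimately makes the construction possible, and it is the input on which the entire argument relies.

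The core of the argument is then the theory of symmetric determinantal representations of plane curves, going back to Dixon. On a desingularization $\widetilde C$ of $C$, symmetric determinantal representations of $P$ (up to equivalence) correspond to certain line bundles on $\widetilde C$, namely non-effective theta characteristics. Given such a theta characteristic one assembles a representation from a basis of sections, with the matrix entries encoding the multiplication pairing into $H^0(\widetilde C,K_{\widetilde C})$. The remaining task is to choose a \emph{real} theta characteristic compatible with the nested-oval structure so that the resulting matrix $A_0$ is positive definite. Positive definiteness should be extracted from a hermitian metric on the line bundle whose signature is controlled by the interlacing of roots on lines through the hyperbolicity point.

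The hardest step, by far, is the third: constructing a real theta characteristic of the correct type and verifying positive definiteness of the coefficient of $x_0$. This is where Vinnikov's analytic machinery (theta functions on real Riemann surfaces, the Riemann vanishing theorem, and the parity/reality classification of theta characteristics) becomes essential, and I see no way to avoid it with elementary means. Handling the singularities of $C$, which do occur for non-generic $p$, is an additional subtlety requiring careful bookkeeping on the compactified Jacobian of $\widetilde C$ together with the branch/torsion data at the singular points; alternatively one can first perturb $p$ to the smooth case and use a limiting argument, but passing to the limit while preserving positive definiteness of $A_0$ is itself delicate.
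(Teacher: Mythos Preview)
The paper does not prove this theorem. Theorem~4.2 is stated with attribution to Helton--Vinnikov \cite{hv} and Lewis--Parrilo--Ramana \cite{lpr} and is used as a black box in the proof of Proposition~4.3; there is no argument for it in the paper to compare your proposal against.

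That said, your outline is a fair high-level sketch of the actual Helton--Vinnikov proof: homogenize to a hyperbolic ternary form, interpret symmetric determinantal representations of the plane curve $P=0$ via theta characteristics on (a desingularization of) the curve, and use the maximally-nested topology of the real locus to select a real theta characteristic that yields a definite coefficient matrix. Your assessment of where the difficulty lies---existence of the right real theta characteristic and the definiteness verification, plus the handling of singular curves---is accurate. As written, however, this is a roadmap rather than a proof: you do not actually carry out any of the three steps, and in particular the sentence ``Positive definiteness should be extracted from a hermitian metric \ldots'' is a hope, not an argument. If you intend to include a proof, you would need either to reproduce the analytic argument from \cite{hv} in detail or, more sensibly, do what the paper does and simply cite the result.
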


For the proof of Theorem \ref{main2} we'll replace the hyperbolicity
cone $C_e(f)$ by a suitable affine hyperplane section (see
\ref{lastep} below), similar as Netzer and Sanyal did in \cite{ns}.
Then the main technical step still missing is the following
proposition:

\begin{prop}\Label{keystep}%
Let $p\in\R[x]$ be an RZ-polynomial with respect to $e\in\R^n$ and
let $u$ be a Nash-smooth boundary point of $S=S_e(p)$. Assume that
$\partial S$ doesn't contain a nondegenerate line segment passing
through~$u$. Then $\partial S$ has strict positive curvature in~$u$.
\end{prop}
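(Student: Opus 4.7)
The plan is to combine the local Nash factorization of $p$ at $u$ with the restrictions the RZ property puts on one-dimensional sections of $p$ near $u$. Let $h$ be a positive Nash inequality for $S$ at $u$, so $\nabla h(u)\ne0$, and factor $p=h^{m_0}q$ in the local ring of Nash germs at $u$ with $h\nmid q$. The exponent $m_0$ is odd: if it were even, $p$ would not change sign across the Nash-smooth branch $\{h=0\}$, forcing a neighborhood of $u$ into $S$ and contradicting $u\in\partial S$. Fix a nonzero tangent vector $v\in\R^n$ with $\bil v{\nabla h(u)}=0$; the goal is to show $v^t\Hess_h(u)\,v<0$. Convexity of $S$ confines the line $u+\R v$ to the supporting hyperplane $\{T_u=0\}$, which is disjoint from $\mathrm{int}(S)$, so $h(u+tv)\le0$ for all small $t$. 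Expanding $h(u+tv)=c_2t^2+c_3t^3+\cdots$, this sign condition forces the first nonzero coefficient to have even index $2m$ with $c_{2m}<0$; if no such $m$ exists, analyticity of $h$ yields $h\equiv p\equiv0$ on the line $u+\R v$, contradicting the hypothesis. It remains to rule out $m\ge2$.

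Suppose $m\ge2$. Perturb to $v_\delta=v+\delta w$ with $w$ fixed and $\bil w{\nabla h(u)}=1$, so that $\delta=\bil{v_\delta}{\nabla h(u)}$ is a small positive parameter. The Taylor expansion
\[h(u+tv_\delta)\>=\>\delta\,t+\sum_{k=2}^{2m-1}O(\delta)\,t^k+\bigl(c_{2m}+O(\delta)\bigr)\,t^{2m}+O(t^{2m+1})\]
has a single dominant Newton-polygon edge of slope $-1/(2m-1)$. The rescaling $t=\delta^{1/(2m-1)}s$ reduces the leading balance to $s\bigl(1+c_{2m}s^{2m-1}\bigr)=0$, whose $2m$ distinct complex solutions are $s=0$, one positive real root of $s^{2m-1}=-1/c_{2m}>0$, and $2m-2$ genuinely non-real roots (the remaining $(2m-1)$-th roots of a positive real). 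By the implicit function theorem, for $\delta>0$ small these perturb to $2m$ distinct zeros of $h(u+tv_\delta)$ near $t=0$, of which $2m-2$ are non-real. Via $p=h^{m_0}q$, each is a root of the polynomial $p(u+tv_\delta)$ of multiplicity at least $m_0$. On the other hand, $p(u+tv_\delta)\not\equiv0$ for $\delta$ small (non-vanishing on the line through $u$ in direction $v$ is an open condition in $v$), and hyperbolicity of the homogenization $p^h$ at the boundary point $(1,u)$ of its hyperbolicity cone---obtained by limit from interior points---forces every root of $p(u+tv_\delta)$ to be real. The resulting $m_0(2m-2)\ge2$ non-real roots give a contradiction, proving $m=1$ and hence $v^t\Hess_h(u)\,v<0$.

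The main obstacle is the Newton-polygon step: one must verify that the $2m-2$ non-real roots predicted by the leading-order balance are genuine roots of $p(u+tv_\delta)$, rather than artifacts of the approximation. This is handled by the uniqueness of the dominant Newton edge---which precludes competing scaling regimes---together with the fact that roots of a product are the multiset union of roots of its factors, so any non-real root of $h(u+tv_\delta)$ is automatically a non-real root of $p(u+tv_\delta)$, regardless of $q$ or of any other Nash-irreducible factor of $p$ at $u$.
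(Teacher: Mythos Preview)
Your argument is correct and takes a genuinely different route from the paper. The paper restricts to the plane through $e$, $u$, $u+v$, invokes the Helton--Vinnikov theorem (Theorem~\ref{lax}) to write $p$ on that plane as a symmetric determinant, and then uses a Schur complement computation to pin down the tangential vanishing order of each Nash branch at $u$ as exactly~$2$. You instead work directly with the Nash factor $h$: a Newton-polygon perturbation shows that tangential vanishing order $2m\ge4$ would produce non-real zeros of $h(u+tv_\delta)$, hence of $p(u+tv_\delta)$, contradicting the real-rootedness of $p$ along every line through the boundary point~$u$. Your route is more elementary in that it bypasses Helton--Vinnikov altogether, trading it for a clean analytic perturbation step; the paper's approach is more algebraic and yields slightly finer structural information (it shows every Nash branch through $u$ has quadratic contact, not just the one bounding~$S$).

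Two small cleanups. The parity claim ``$m_0$ is odd'' is both unnecessary (you only use $m_0\ge1$ afterwards, which already follows from $\{h=0\}\subset\{p=0\}$ and irreducibility of $h$ in the Nash local ring) and not fully justified as stated (your sign-change argument tacitly assumes $q(u)\ne0$); simply drop it. And the step ``hyperbolicity of $p^h$ at the boundary point $(1,u)$, obtained by limit from interior points'' is correct but deserves one extra sentence: since $p^h$ is hyperbolic with respect to every interior point $(1,u')$ of its cone, each polynomial $p(u'+tw)$ is real-rooted, and letting $u'\to u$ preserves real-rootedness (roots can escape to infinity as the degree drops, but cannot leave~$\R$).
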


See \cite[Lemma 2.4(ii)]{ns} for a similar statement, which however
is easier to prove due to the ordinary smoothness assumption there.
For a non-smooth example of a situation as in Proposition
\ref{keystep}, see \ref{nashnotsmo}.3 above.
Note that the non-existence of line segments in $\partial S$ is
obviously necessary for the conclusion to hold.

We first prove an easy lemma:

\begin{lem}\Label{isi}%
Let $f\in\R[x]$ be a form that is hyperbolic with respect to
$e\in\R^n$, and let $u$ be a Nash-smooth boundary point of $C_e(f)$.
If $H\subset\R^n$ is any affine-linear subspace containing $e$ and
$u$, the point $u$ is a Nash-smooth boundary point of $H\cap C_e(f)$.
\end{lem}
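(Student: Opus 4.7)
The plan is to promote a positive Nash inequality for $C_e(f)$ at $u$ to one for $H\cap C_e(f)$ by restriction. Let $h$ be a Nash function on an open neighborhood $U$ of $u$ in $\R^n$ with $\nabla h(u)\ne0$ and $C_e(f)\cap U=\{\xi\in U\colon h(\xi)\ge0\}$. I claim that $h|_{H\cap U}$ serves as a positive Nash inequality for $H\cap C_e(f)$ at $u$, when $H$ is viewed as the ambient affine space of dimension $\dim H$. That $h|_{H\cap U}$ is Nash is routine, since its graph is cut out of the graph of $h$ by the linear equations defining $H$, and analyticity is preserved under restriction. Intersecting both sides of $C_e(f)\cap U=\{h\ge0\}$ with $H$ produces the local description $(H\cap C_e(f))\cap(H\cap U)=\{\xi\in H\cap U\colon h(\xi)\ge0\}$, and $H\cap C_e(f)$ has nonempty interior in $H$ because $e$ is an interior point of $C_e(f)$ contained in $H$.

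The substantial step is to show that the intrinsic gradient of $h|_H$ at $u$ does not vanish, equivalently that $\nabla h(u)$ is not orthogonal to the direction space of $H$. I would use the positive tangent $T_u=\sum_i(\partial h/\partial x_i)(u)(x_i-u_i)$ from \ref{dfnashsmoth}. It is an affine function that is nonnegative on $C_e(f)$, vanishes at $u$, and has nonzero linear part; any such function is strictly positive on the interior of $C_e(f)$, for if $T_u$ vanished at some interior point $v$, then a small ball around $v$ would lie in $C_e(f)$ and the nonconstant affine $T_u$ would necessarily take negative values there, contradiction. Hence $T_u(e)>0$, which rewrites as $\langle\nabla h(u),e-u\rangle\ne0$. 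Since $e$ and $u$ both lie in $H$, the vector $e-u$ lies in the direction space of $H$, and this inequality says that $\nabla h(u)$ is not orthogonal to that space; thus $\nabla(h|_H)(u)\ne0$.

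The only potential obstacle is precisely this transversality observation. Without the hypothesis $e\in H$, the subspace $H$ could a~priori be tangent to $\partial C_e(f)$ at $u$, in which case the restricted function would degenerate and fail to be a positive inequality at~$u$. The role of the assumption $e\in H$ is exactly to produce a direction in $H$ along which the supporting hyperplane $\{T_u=0\}$ is crossed transversally.
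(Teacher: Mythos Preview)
Your proof is correct and follows the same line as the paper's: both arguments restrict a positive Nash inequality $h$ to $H$ and verify the one nontrivial point, namely that $\langle\nabla h(u),\,e-u\rangle\ne0$, by using that $e$ lies in the interior of $C_e(f)$. Your justification via the positive tangent $T_u$ being strictly positive on the interior is in fact more explicit than the paper's, which simply notes that otherwise the segment from $u$ to $e$ would be tangent to $\partial C_e(f)$ at $u$, ``which is absurd.''
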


\begin{proof}
Let $h$ be a positive Nash inequality for $C:=C_e(f)$, defined on a
neighborhood of $u$ in $\R^n$. Consider the Nash function
$g(t):=h(te+(1-t)u)$, defined on some open intervall
$\left]-\epsilon,\epsilon\right[\subset\R$. We claim that
$g'(0)\ne0$. Since the gradient $\nabla_h(u)\ne0$ by hypothesis,
assuming $g'(0)=0$ would imply that $\bil{u-e}{\nabla_h(u)}=0$. Thus
the line connecting $u$ and $e$ would be tangent to $\partial C$ at
$u$, which is absurd.
\end{proof}

\begin{proof}[Proof of Proposition \ref{keystep}]
We may assume $e=0$ and $p(e)=1$. Let $h$ be a local positive Nash
inequality for $S$ at $u$, and fix a vector $0\ne v\in\R^n$ with
$\bil{\nabla_h(u)}v=0$.
We have to show $v^t\nabla^2_h(u)v<0$. Let the polynomial
$g\in\R[x,y]$ be defined by $g(x,y)=p((1-y)u+xv)$. Then $g(x,y)$ is
an RZ-polynomial with respect to $(0,1)$ such that the origin
$O=(0,0)$ lies on the boundary of the rigidly convex set
$K:=S_{(0,1)}(g)$.
The local Nash inequality $h$ for $S$ at $u$ corresponds to a Nash
factor $\tilde g(x,y)$ of $g(x,y)$ in a neighborhood of~$O$, and we
have $v^t\nabla^2_h(u)v=\partial_x^2\tilde g(0,0)$, the second
derivative of $\tilde g(x,0)$ at $x=0$. Therefore we have to show
$\partial_x^2\tilde g(x,0)<0$. Note that the line $y=0$ is tangent to
the local Nash curve $\tilde g(x,y)=0$ at the origin.

It may happen that $g(x,0)\equiv0$ identically. Write $g=y^kG$ with
$k\ge0$ and $G\in\R[x,y]$ such that $G(x,0)\not\equiv0$. Then $G$ is
still an RZ-polynomial with respect to $(0,1)$, and the rigidly
convex set defined by $G$ is the same as for $g$, i.e.\
$S_{(0,1)}(G)=S_{(0,1)}(g)=K$. Moreover, $\tilde g(x,y)$ is a local
Nash factor of $G$ as well, since otherwise $\partial K$ would
contain $[-\gamma,\gamma]\times\{0\}$ for some $\gamma>0$,
contradicting the hypothesis. To complete the proof we may therefore
replace $g$ by $G$. Returning to the previous notation, this means
that we can assume from now on that $g(x,y)$ is not divisible by~$y$.

According to the (ex) Lax conjecture \ref{lax}, there are real
symmetric $d\times d$-matrices $U,\,V$ such that
$g(x,y)=\det(I+xV+(1-y)U)$.
Since $O=(0,0)$ is a boundary point of $K$, we have $I+U\succeq0$ and
$\det(I+U)=0$.
Let $m=\dim\ker(I+U)$, so $m\ge1$. After a suitable orthogonal change
we may assume that
$I+U=\bigl(\begin{smallmatrix}E&0\\0&0\end{smallmatrix}\bigr)$,
written as a block matrix with diagonal blocks of size $d-m$ and $m$,
respectively, and with $E\succ0$. Writing $V$ in the same way as
$V=\bigl(\begin{smallmatrix}A&B^t\\B&C\end{smallmatrix}\bigr)$, we
have $U=\bigl(\begin{smallmatrix}E-I&0\\0&-I\end{smallmatrix}\bigr)$
and
\begin{equation}\Label{gxyblockdet}%
g(x,y)\>=\>\det\begin{pmatrix}E+xA+y(I-E)&xB^t\\xB&xC+yI
\end{pmatrix}.
\end{equation}
The order of vanishing of $g$ at $O$ is $m$, the multiplicity of the
root $y=0$ of $g(0,y)=y^m\cdot\det(yI+(1-y)E)$.

By Weierstrass preparation there exists a (unique) convergent power
series $u\in\R\lpow x,y\rpow$ with $u(0,0)\ne0$ such that
$P(x,y)=u(x,y)g(x,y)$ is a Weierstrass polynomial in
$\R\lpow x\rpow[y]$.
In other words,
$$P(x,y)\>=\>y^m+a_1(x)y^{m-1}+\cdots+a_m(x)$$
with convergent series $a_i\in\R\lpow x\rpow$ that satisfy $a_i(0)=0$.
Since $g$ is an RZ-polynomial, it is easy to see that $P$ factors as
$P(x,y)=\prod_{i=1}^m\bigl(y-g_i(x)\bigr)$ where the $g_i$ are
(convergent) power series (and not just Puiseux series) in
$\R\lpow x\rpow$.
Since $O$ is a Nash-smooth boundary point of $K$ by hypothesis, each
of the local curves $y=g_i(x)$ ($i=1,\dots,m$) has a horizontal
tangent at the origin.
None of the $g_i(x)$ vanishes identically,
so we have $g_i(x)=c_ix^{k_i}+{}$(higher order terms) with $c_i\ne0$
and $k_i\ge2$ for each~$i$. This means that the (lowest) degree~$m$
subform of $g(x,y)$ is $cy^m$ with $c\ne0$. By \eqref{gxyblockdet},
on the other hand, this subform is
$$\det(E)\cdot\det(xC+yI)\>=\>\det(E)\cdot x^m\cdot
p_{-C}(\frac yx)$$
where $p_{-C}(t)=\det(tI+C)$ denotes the characteristic polynomial
of $-C$. Therefore $p_{-C}(t)=t^m$, which means that $C$ is the zero
matrix.

It follows in particular that $g(x,0)=\det\bigl(\begin{smallmatrix}
E+xA&xB^t\\xB&0\end{smallmatrix}\bigr)$. Since $E$ is invertible, a
Schur complement argument shows that
\begin{equation}\Label{knakpkt}%
g(x,0)\>=\>
x^{2m}\cdot\det(E)\det(-BE^{-1}B^t)+\text{(higher powers of }x).
\end{equation}
We claim that $\det(BE^{-1}B^t)\ne0$. Indeed, if the matrix
$BE^{-1}B^t$ were singular, there would be a vector $w\ne0$ in
$\R^m$ with $B^tw=0$, since $E$ is positive definite.
This would give $\bigl(\begin{smallmatrix}E+xA&xB^t\\xB&0
\end{smallmatrix}\bigr)\binom0w=0$ for every $x\in\R$, implying that
$g(x,0)\equiv0$. Since this case was excluded before, it follows that
$\det(BE^{-1}B^t)\ne0$.

The vanishing order of $g(x,0)$ at $x=0$, which is also the
vanishing order of $P(x,0)=\prod_{i=1}^m(-g_i(x))$, is therefore
equal to~$2m$, by \eqref{knakpkt}.
Since each factor $g_i(x)$ has vanishing order $k_i\ge2$ at $x=0$, we
conclude that $k_i=2$ for $i=1,\dots,m$. In a neighborhood of $O$,
therefore, the zero set of $g(x,y)$ is the union of the $m$ graphs
$y=g_i(x)=a_ix^2+\cdots$ with $a_i\ne0$.
Let the boundary of $K$ near $O$ correspond to $y=g_1(x)$, say. Then
the local positive Nash inequality for $K$ is
$\tilde g(x,y)=y-g_1(x)=y-a_1x^2\cdots$. In particular,
$\partial_x^2\tilde g(0,0)=-2a_1\ne0$. From convexity of $K$ we get
$a_1>0$, and hence $\partial_x^2\tilde g(0,0)<0$, which finally
proves the proposition.
\end{proof}

\begin{lab}\Label{lastep}%
We now finish the proof of Theorem \ref{main2}. Let $f\in\R[x]$ be
hyperbolic with respect to $e$, and assume that the hyperbolicity
cone $C=C_e(f)$ has Nash-smooth boundary.
We have to show that $\sxdeg(C)\le2$, see \ref{spectrahsxdegetc}.
As in \cite[p.~220]{ns} we may assume that $C$ is pointed, and as
there we find a hyperplane $H$ with $e\in H$ for which $S:=C\cap H$
is compact. The restriction $p$ of $f$ to $H$ is real-zero with
respect to~$e$ and satisfies $S_e(p)=S$. The hypothesis of
Nash-smoothness carries over from $\partial C$ to $\partial S$ by
Lemma \ref{isi}. The boundary $\partial S$ cannot contain any
non-trivial line segment, by Nash-smoothness and since $S$ is
compact.
Therefore Proposition \ref{keystep} shows that $\partial S$
has strict positive curvature everywhere. Now Theorem \ref{main1}
implies $\sxdeg(S)\le2$. Since $C$ is the conical hull of $S$, we
finally conclude $\sxdeg(C)\le2$ using \cite[Prop.~1.6]{sch:sxdeg}.
\end{lab}


\end{document}